\newcommand{\ra}{\rightarrow}
\newcommand{\PP}{\mathbb P}
\newcommand{\cO}{\mathcal{O}}
\newcommand{\Hom}{\mbox{Hom}}
\newcommand{\rk}{\mbox{rk}}
\newcommand{\Cl}{\operatorname{Cliff}}
\theoremstyle{plain}
\newtheorem{theorem}{Theorem}[section]
\newtheorem{lem}[theorem]{Lemma}
\newtheorem{prop}[theorem]{Proposition}
\newtheorem{cor}[theorem]{Corollary}
\newtheorem{rem}[theorem]{Remark}
\numberwithin{equation}{section}
\begin{document}
\title[Trigonal curves]{Bundles computing Clifford indices on trigonal curves}

\author{H. Lange}
\author{P. E. Newstead}

\address{H. Lange\\Department Mathematik\\
              Universit\"at Erlangen-N\"urnberg\\
              Cauerstrasse 11\\
              D-$91058$ Erlangen\\
              Germany}
              \email{lange@mi.uni-erlangen.de}
\address{P.E. Newstead\\Department of Mathematical Sciences\\
              University of Liverpool\\
              Peach Street, Liverpool L69 7ZL, UK}
\email{newstead@liv.ac.uk}

\date{\today}

\thanks{Both authors are members of the research group VBAC (Vector Bundles on Algebraic Curves). The second author 
would like to thank the Department Mathematik der Universit\"at 
         Erlangen-N\"urnberg for its hospitality}
\keywords{Semistable vector bundle, Clifford index, trigonal curve}
\subjclass[2000]{Primary: 14H60; Secondary: 14F05, 32L10}

\begin{abstract}
In this paper, we determine bundles which compute the higher Clifford indices for trigonal curves.
\end{abstract}
\maketitle

\section{Introduction}\label{intro}

In \cite{ln}, we introduced two definitions for Clifford indices of semistable vector  bundles on a smooth projective curve of  genus $g\ge4$. Many properties of these indices have been obtained in \cite{ln} and subsequent papers. It is an interesting question to determine the bundles which compute the Clifford indices; for results in ranks 2 and 3, see \cite{ln3, ln2}.

When $C$ has classical Clifford index $0$ (i.e. $C$ is hyperelliptic), it is already known that the higher Cifford indices are also $0$ and that all bundles computing them are direct sums of copies of $H^m$, where $H$ is the hyperelliptic line bundle (see \cite[Proposition 2]{re}). In this paper, we consider the next case, when $C$ is trigonal and therefore has classical Clifford index $1$. The higher Clifford indices are also equal to $1$, but one needs to determine the bundles that compute them. We show that all bundles which compute  the less restrictive Clifford index compute also the more restrictive one and determine many of these bundles. The answers are slightly different for $g=4$ (Theorems \ref{thm3.2} and \ref{thm3.4}) and $g\ge5$ (Theorem 4.7) (here our results are complete only for $g=5$, $g=6$ and $g\ge17$ due to the fact that in other cases we do not know whether a certain line bundle is normally generated).

In the final section, we consider trigonal curves of genus $3$. In this case, only one of the two higher Clifford indices is defined and it does not always take the value $1$. We determine all these indices and the bundles that compute them (Theorem \ref{thm5.3}).

We work throughout on a curve $C$ of genus $g\ge3$ defined over an algebraically closed field of characteristic $0$. For any vector bundle $E$ of rank $n$ on $C$, we write $d_E$ for the degree of $E$ and $\mu(E):=\frac{d_E}n$ for the slope of $E$.

\section{Preliminaries}\label{prelim}
We begin by recalling some definitions from \cite{ln}. For any curve $C$, the {\em gonality sequence} 
$d_1,d_2,\ldots,d_r,\ldots$ of $C$ is defined by 
$$
d_r := \min \{ d_L \;|\; L \; \mbox{a line bundle on} \; C \; \mbox{with} \; h^0(L) \geq r +1\}.
$$
The curve $C$ is said to be {\it trigonal} if $d_1=3$, in which case (see \cite[Remark 4.5(b)]{ln})
\begin{equation}\label{eq1.1}
d_r =\left\{ \begin{array}{lcl}
             3r &  &  1 \leq r \leq \left[ \frac{g-1}{3} \right],\\
             r + g - 1 - \left[ \frac{g-r-1}{2} \right] & \mbox{for} & \left[ \frac{g-1}{3} \right] < r \leq g-1,\\
             r+g & & r \geq g.
             \end{array}  \right. 
\end{equation}

For any vector bundle $E$ of rank $n$ on $C$, we define
$$
\gamma(E) := \frac{1}{n} \left(d_E - 2(h^0(E) -n)\right) = \mu(E) -2\frac{h^0(E)}{n} + 2.
$$
If $g \geq 4$, we then define, for any positive integer $n$,
$$
\Cl_n(C):= \min_{E} \left\{ \gamma(E) \;\left| 
\begin{array}{c} E \;\mbox{semistable of rank}\; n, \\
h^0(E) \geq 2n,\; \mu(E) \leq g-1
\end{array} \right. \right\}
$$
and
$$
\gamma_n(C):= \min_{E} \left\{ \gamma(E) \;\left| 
\begin{array}{c} E \;\mbox{semistable of rank}\; n, \\
h^0(E) \geq n+1,\; \mu(E) \leq g-1
\end{array} \right. \right\}.
$$
(In \cite{ln} and some other papers, $\Cl_n(C)$ was denoted by $\gamma'_n(C)$.) Note that $\Cl_1(C)=\gamma_1(C) = \Cl(C)$ is the usual Clifford index of the curve $C$. 
We say that $E$ {\it contributes to} $\Cl_n(C)$ (resp. $\gamma_n(C)$) if $E$ is semistable of rank $n$ with $h^0(E) \geq 2n$ (resp. $n+1$) and $\mu(E) \leq g-1$. 
If in addition $\gamma(E) = \Cl_n(C)$ (resp. $\gamma_n(C)$), we say that $E$ {\it computes} $\Cl_n(C)$ (resp. $\gamma_n(C)$). A trigonal curve of genus $g\ge4$ has $\Cl(C)=1$ and hence $\Cl_n(C)=\gamma_n(C)=1$ for all $n$ \cite[Proposition 2.6(a)]{ln}. We shall see later that, for $n\ge2$, $\gamma_n(C)$ can be defined even when $g=3$, but then it does not always take the value $1$.

\begin{lem} \label{lem2.1}
Let $C$ be a trigonal curve of genus $g \geq 4$ with trigonal bundle $T$ and let $E$ be a bundle of rank $n$ with $\gamma(E) = 1$. Then
$$
h^0(E \otimes T^*) + h^0(E^* \otimes T^* \otimes K_C) \geq n(g-3).
$$ 
\end{lem}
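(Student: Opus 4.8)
The plan is to reduce the inequality to a single statement about sections of $E\otimes T$ and $E\otimes T^*$, and then to obtain this from the evaluation sequence of the trigonal pencil combined with Riemann--Roch and the hypothesis $\gamma(E)=1$. First I would rewrite the second summand by Serre duality. Since $(E\otimes T)^*\otimes K_C=E^*\otimes T^*\otimes K_C$, we have
\[
h^0(E^*\otimes T^*\otimes K_C)=h^1(E\otimes T),
\]
so the assertion is equivalent to $h^0(E\otimes T^*)+h^1(E\otimes T)\ge n(g-3)$.

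The key input is the evaluation sequence attached to $T$. Because $d_1=3$, the bundle $T$ is base-point-free (a base point would produce a line bundle of degree $2$ with at least two sections, forcing $d_1\le2$), so the evaluation map $H^0(T)\otimes\mathcal{O}_C\to T$ is surjective. Its kernel is a line bundle $L$ satisfying $L\otimes T=\det(\mathcal{O}_C^{\,2})=\mathcal{O}_C$, hence $L=T^*$, and we obtain
\[
0\to T^*\to\mathcal{O}_C^{\,2}\to T\to 0.
\]
Tensoring with $E$ and passing to the long exact cohomology sequence gives a left-exact sequence $0\to H^0(E\otimes T^*)\to H^0(E)^{\oplus2}\xrightarrow{\phi}H^0(E\otimes T)$, whence $2h^0(E)=h^0(E\otimes T^*)+\operatorname{rank}\phi$. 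Since $\operatorname{rank}\phi\le h^0(E\otimes T)$, this yields the inequality
\[
h^0(E\otimes T^*)+h^0(E\otimes T)\ge 2h^0(E).
\]

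It remains to convert the left-hand side into the desired form. By Riemann--Roch, $h^1(E\otimes T)=h^0(E\otimes T)-\chi(E\otimes T)=h^0(E\otimes T)-d_E-4n+ng$, so
\[
h^0(E\otimes T^*)+h^1(E\otimes T)=h^0(E\otimes T^*)+h^0(E\otimes T)-d_E-4n+ng\ge 2h^0(E)-d_E-4n+ng.
\]
Finally, the hypothesis $\gamma(E)=1$ reads $\mu(E)-\tfrac{2h^0(E)}{n}+2=1$, that is $2h^0(E)=d_E+n$; substituting gives exactly $n(g-3)$, which completes the argument. I do not expect a genuine obstacle here: the proof is essentially formal once the trigonal sequence is in hand, and the only points needing care are the justification that $T$ is base-point-free (so that the evaluation sequence has the stated shape) and the sign bookkeeping in Riemann--Roch. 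Note in particular that semistability of $E$ and the bound $\mu(E)\le g-1$ are not used, so the statement holds for any bundle with $\gamma(E)=1$.
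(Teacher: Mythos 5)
Your proof is correct and follows essentially the same route as the paper: both tensor the evaluation sequence $0\to T^*\to H^0(T)\otimes\mathcal{O}_C\to T\to 0$ with $E$, take global sections, and combine Riemann--Roch and Serre duality with the identity $2h^0(E)=d_E+n$ coming from $\gamma(E)=1$. Your additional justifications (base-point-freeness of $T$, identification of the kernel with $T^*$) and the observation that semistability is never used are fine but do not change the argument.
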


\begin{proof}
Since $\gamma(E) = 1$, we have
$$
d_E = n + 2(h^0(E) -n) = 2h^0(E) -n.
$$ 
Consider the evaluation sequence
$$
0 \ra T^* \ra H^0(T) \otimes \cO_C \ra T \ra 0.
$$ 
Tensoring with $E$, taking global sections and using Riemann-Roch, we obtain
\begin{eqnarray*}
2h^0(E) &\leq & h^0(E \otimes T^*) + h^0(E \otimes T)\\
& = & h^0(E \otimes T^*) + h^0(E^* \otimes T^* \otimes K_C) + 2h^0(E) + 2n - n(g-1).
\end{eqnarray*}
This implies the assertion.
\end{proof}

\begin{lem} \label{lem2.2}
Let $C$ be a trigonal curve of genus $g \geq 5$ with trigonal bundle $T$. Then $T$ is unique and there exists no non-trivial extension
\begin{equation*}
0 \ra T \ra  E \ra T \ra 0
\end{equation*}
with $h^0(E) = 4$. Moreover $T$ is the only bundle computing $\Cl(C)$.
\end{lem}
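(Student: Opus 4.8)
The plan is to treat the three assertions in turn, using the base-point-free pencil trick together with the hypothesis $\Cl(C)=1$ as the main tools. Throughout, recall that $T$ is base-point-free with $\deg T=3$, $h^0(T)=2$ and $\gamma(T)=1$. For the uniqueness of $T$, suppose $T_1$ and $T_2$ were two distinct trigonal bundles. I would apply the base-point-free pencil trick to $T_1$ with $F=T_2$: the multiplication map $H^0(T_1)\otimes H^0(T_2)\ra H^0(T_1\otimes T_2)$ has kernel $H^0(T_2\otimes T_1^*)$, and since $\deg(T_2\otimes T_1^*)=0$ and $T_2\not\cong T_1$, this kernel vanishes. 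Hence $h^0(N)\ge4$ for $N:=T_1\otimes T_2$, which has degree $6$, so $\gamma(N)\le 6-2\cdot3=0$. A Riemann--Roch check gives $h^1(N)\ge2$ for $g\ge5$, so $N$ (replacing $N$ by $K_C\otimes N^*$ if necessary to arrange $\mu\le g-1$) contributes to $\Cl(C)$ with $\gamma(N)\le0$, contradicting $\Cl(C)=1$. Thus $T_1=T_2$.

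For the non-existence of the self-extension, the key step is to translate the numerical condition $h^0(E)=4$ into the surjectivity of a multiplication map. An extension class lies in $\Ext^1(T,T)=H^1(\cO_C)$, and the long exact sequence of $0\ra T\ra E\ra T\ra 0$ gives $h^0(E)=2+\dim\ker\delta_e$, where $\delta_e\colon H^0(T)\ra H^1(T)$ is cup product with $e$. Thus $h^0(E)=4$ exactly when $\delta_e=0$. Using Serre duality in the form $H^1(\cO_C)=H^0(K_C)^*$ and $H^1(T)=H^0(K_C\otimes T^*)^*$, together with the compatibility of cup product with multiplication of sections, I would establish the identity $\{e:\delta_e=0\}=\IM(m)^\perp$, where $m\colon H^0(T)\otimes H^0(K_C\otimes T^*)\ra H^0(K_C)$ is the multiplication map. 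Hence a nontrivial class $e$ with $h^0(E)=4$ exists if and only if $m$ fails to be surjective, so it suffices to prove that $m$ is surjective.

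To do so I would again invoke the base-point-free pencil trick, this time for $T$ with $F=K_C\otimes T^*$: the kernel of $m$ is $H^0(K_C\otimes (T^*)^2)$, so $\dim\IM(m)=2h^0(K_C\otimes T^*)-h^0(K_C\otimes (T^*)^2)$. Serre duality and Riemann--Roch rewrite the right-hand side as $g+3-h^0(T^2)$, so $m$ is surjective precisely when $h^0(T^2)=3$. The pencil trick gives $h^0(T^2)\ge3$ automatically; and if $h^0(T^2)\ge4$ then $\gamma(T^2)\le0$ while $h^1(T^2)\ge2$ for $g\ge5$, so $T^2$ would contribute to $\Cl(C)$ with $\gamma\le0$, again contradicting $\Cl(C)=1$. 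Hence $h^0(T^2)=3$, $m$ is surjective, and this part follows.

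Finally, for the uniqueness of $T$ among bundles computing $\Cl(C)=\Cl_1(C)$, let $L$ be a line bundle with $h^0(L)=r+1\ge2$, $\deg L\le g-1$ and $\gamma(L)=1$, so that $\deg L=2r+1$. Since $h^0(L)\ge r+1$, the definition of the gonality sequence gives $d_r\le 2r+1$. Feeding in the formula \eqref{eq1.1} and comparing with $2r+1\le g-1$ (that is, $r\le(g-2)/2$), I expect that for $g\ge5$ the only possibility is $r=1$: in the first range of \eqref{eq1.1} the inequality $3r\le 2r+1$ forces $r\le1$, while the middle and upper ranges force $r\ge g-3$, which is incompatible with $r\le(g-2)/2$ once $g\ge5$. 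Then $L$ is a $g^1_3$, hence $L=T$ by the first part. I anticipate that the main obstacle is the cohomological translation in the second part, namely making the identification $\{e:\delta_e=0\}=\IM(m)^\perp$ fully precise via Serre duality; the remaining steps are routine applications of the pencil trick, Riemann--Roch and \eqref{eq1.1}.
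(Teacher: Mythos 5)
Your proposal is correct and follows essentially the same route as the paper: the same reduction of the extension statement to surjectivity of $H^0(T)\otimes H^0(K_C\otimes T^*)\ra H^0(K_C)$, the same identification of its kernel with $H^0(K_C\otimes T^{*2})$, and the same final reduction to $h^0(T^2)\le 3$ (which you settle by a Clifford-index contradiction where the paper simply invokes $d_3\ge 7$ from \eqref{eq1.1} --- an equivalent one-line justification). The only other difference is that you supply proofs of the uniqueness of $T$ (via the base-point-free pencil trick) and of the final assertion, both of which the paper treats as well known or as immediate from \eqref{eq1.1}; your arguments for these are also correct.
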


\begin{proof}
The uniqueness of $T$ is well known. We need to show that the map
$$
H^1(T^* \otimes T) \ra \Hom (H^0(T),H^1(T))
$$
is injective or equivalently 
$$
H^0(T) \otimes H^0(K_C \otimes T^*) \ra H^0(K_C)
$$
is surjective. Now $h^0(T) = 2, \; h^0(K_C \otimes T^*) = g-2$ and $h^0(K_C) = g$. Moreover, the kernel is isomorphic to $H^0(K_C \otimes T^{*2})$ which has dimension
$$
h^0(T^2) + 2g -8 + 1 -g = h^0(T^2) + g-7.
$$
We need to prove that $h^0(T^2) \leq 3$. This holds, because $d_3 \geq 7$ by \eqref{eq1.1}.

The last part follows from \eqref{eq1.1}.
 \end{proof}

\section{genus 4}\label{g4}

Let $C$ be a non-hyperelliptic curve of genus $g = 4$. So 
$$
\gamma_n(C) = \Cl_n(C) = \Cl(C) = 1
$$ 
for all $n$ and 
$$
d_1 = 3, \; d_2 = 5, \; d_3 = 6, \; d_4 = 8.
$$
We distinguish 2 cases.

\begin{enumerate}
 \item[(i)] There are 2 line bundles $M_1,\; M_2$ of degree 3 with $h^0 = 2$ and $K_C \simeq M_1 \otimes M_2$. 
\item[(ii)] There is a unique line bundle $M$ of degree 3 with $h^0(M) = 2$ and $K_C \simeq M^2$.
\end{enumerate}

\begin{lem} \label{lem3.1}
Let $C$ be a curve of type {\em (i)}. Then the map
$$
H^0(M_i) \otimes H^0(M_j) \ra H^0(M_i \otimes M_j)
$$
is surjective for $i,j \in\{1,2\}$. 
\end{lem}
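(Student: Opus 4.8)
The plan is to reduce everything to a single application of the base-point-free pencil trick. First I would note that each $M_i$ is base-point-free: if $M_i$ had a base point $p$, then $M_i(-p)$ would be a $g^1_2$ and $C$ would be hyperelliptic, contrary to assumption. Hence $H^0(M_i)$ is a base-point-free pencil and we have the Koszul sequence
$$0 \ra M_i^* \ra H^0(M_i) \otimes \cO_C \ra M_i \ra 0.$$

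Tensoring this sequence with $M_j$ and passing to global sections identifies the kernel of the multiplication map
$$H^0(M_i) \otimes H^0(M_j) \ra H^0(M_i \otimes M_j)$$
with $H^0(M_j \otimes M_i^*)$. The two cases then come down to dimension counts, noting that $h^0(M_i)\,h^0(M_j) = 4$ throughout.

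When $i \neq j$, the line bundle $M_j \otimes M_i^*$ has degree $0$ and is non-trivial precisely because we are in type (i), where $M_1 \neq M_2$; hence it has no sections and the map is injective. Since its target $H^0(M_i \otimes M_j) = H^0(K_C)$ also has dimension $4$, the map is an isomorphism, in particular surjective. When $i = j$, the kernel is $H^0(\cO_C)$, of dimension $1$, so the image has dimension $3$; it therefore suffices to verify $h^0(M_i^2) = 3$. This follows from Riemann--Roch on the degree-$6$ bundle $M_i^2$ once one checks $h^1(M_i^2) = h^0(K_C \otimes M_i^{*2}) = 0$, and $K_C \otimes M_i^{*2} = M_1 \otimes M_2 \otimes M_i^{*2}$ is again a non-trivial degree-$0$ bundle by $M_1 \neq M_2$.

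The computation is short, so I do not expect a serious obstacle; the one point on which the whole argument turns is the non-triviality of the degree-$0$ bundles $M_j \otimes M_i^*$ and $K_C \otimes M_i^{*2}$, which is exactly what distinguishes type (i) from type (ii). In type (ii) these bundles would be trivial, the relevant value of $h^0$ would jump, and surjectivity would fail for the same reason; so I would present the hypothesis $M_1 \neq M_2$ as the essential input rather than as a routine remark.
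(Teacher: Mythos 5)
Your proposal is correct and takes essentially the same route as the paper's proof: the evaluation (Koszul) sequence for $M_i$, tensored with $M_j$, followed by the same dimension counts in the two cases. You in fact supply details the paper leaves implicit, namely the base-point-freeness of $M_i$ and the verification via Riemann--Roch and non-triviality of $M_j\otimes M_i^*$ that $h^0(M_i^2)=3$, which the paper simply asserts.
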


\begin{proof}
Consider the evaluation sequence
$$
0 \ra M_i^* \ra H^0(M_i) \otimes \cO_C \ra M_i \ra 0.
$$ 
Tensoring with $M_j$ and taking global sections gives
$$
0 \ra H^0(M_i^* \otimes M_j) \ra H^0(M_i) \otimes H^0(M_j) \ra H^0(M_i \otimes M_j)
$$
If $i \neq j$, $h^0(M_i^* \otimes M_j) = 0$ and $h^0(M_i \otimes M_j) = h^0(K_C) = 4$. This implies the assertion.
If $i=j$, $h^0(M_i^* \otimes M_i) = 1$ and $h^0(M_i^2) = 3$ again giving surjectivity.
\end{proof}

\begin{theorem} \label{thm3.2}
Let $C$ be a curve of type {\em (i)} and $E$ be a bundle computing $\gamma_n(C)$. Then
$$
E \simeq \oplus_{i=1}^n M_{j_i}
$$
with $M_{j_i} = M_1$ or $M_2$. In particular $E$ computes $\Cl_n(C)$. 
\end{theorem}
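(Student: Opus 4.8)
The plan is to show first that $\mu(E)=3$, equivalently $h^0(E)=2n$, and then to realise $E$ as an iterated extension of copies of $M_1,M_2$ which, thanks to Lemma \ref{lem3.1}, must split. Throughout let $\{i,j\}=\{1,2\}$. Since $\gamma(E)=1$ we have $d_E=2h^0(E)-n$, and $\mu(E)\le g-1=3$ forces $h^0(E)\le 2n$. Because each $M_i$ is a trigonal bundle with $M_i^*\otimes K_C\cong M_j$, applying Lemma \ref{lem2.1} with $T=M_1$ and with $T=M_2$ gives, after the identifications $h^0(E\otimes M_i^*)=\dim\Hom(M_i,E)$ and $h^0(E^*\otimes M_i^*\otimes K_C)=h^0(E^*\otimes M_j)=\dim\Hom(E,M_j)$, the two inequalities
$$
\dim\Hom(M_1,E)+\dim\Hom(E,M_2)\ge n,\qquad \dim\Hom(M_2,E)+\dim\Hom(E,M_1)\ge n.
$$

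First I would analyse a nonzero homomorphism $M_i\ra E$. As $M_i$ is a line bundle it is injective as a sheaf map, so its saturation is a line subbundle $L\subseteq E$ with $\deg L\ge\deg M_i=3$; semistability of $E$ gives $\deg L\le\mu(E)\le3$, whence $\deg L=3=\mu(E)$ and $L=M_i$ is a subbundle. Thus the existence of a nonzero map $M_i\ra E$ already forces $\mu(E)=3$ and exhibits $M_i$ as a subbundle. Consequently, either some $\Hom(M_i,E)\ne0$ (the good case, treated below), or $\Hom(M_1,E)=\Hom(M_2,E)=0$, in which case the displayed inequalities yield $\dim\Hom(E,M_1)\ge n$ and $\dim\Hom(E,M_2)\ge n$. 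When $\mu(E)=3$ this second alternative is impossible, since then $\dim\Hom(E,M_i)=h^0(E^*\otimes M_i)\le n$ for the slope-zero semistable bundle $E^*\otimes M_i$, with equality forcing $E^*\otimes M_i\cong\cO_C^{\oplus n}$ and hence $\Hom(M_i,E)\ne0$, a contradiction. The difficulty is therefore genuinely confined to excluding $\mu(E)<3$, and I expect this to be the main obstacle: one has $\ge n$ independent maps $E\ra M_1$ and to $M_2$ for a semistable bundle of slope $<3$, and the plan is to saturate their images, pass to kernels, and bound the $h^0$ and $\gamma$ of the resulting rank-$(n-1)$ pieces so as to contradict $\gamma(E)=1$.

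Granting $\mu(E)=3$ and a subbundle $M_i\subseteq E$, set $E'=E/M_i$. Then $E'$ is semistable of rank $n-1$ and slope $3$ with $h^0(E')\ge h^0(E)-h^0(M_i)=2n-2$, so $\gamma(E')=5-2h^0(E')/(n-1)\le1$. Since $\gamma_{n-1}(C)=1$, this forces $\gamma(E')=1$ and $h^0(E')=2(n-1)$, so $E'$ computes $\gamma_{n-1}(C)$. By induction on $n$ we obtain $E'\cong\bigoplus_{k=2}^n M_{j_k}$ with each $M_{j_k}\in\{M_1,M_2\}$; the base case $n=1$ is the statement that the only line bundles of degree $3$ with $h^0=2$ are $M_1$ and $M_2$ (a line bundle computing $\gamma_1(C)=1$ has degree $2h^0-1$ with $h^0\ge2$ and $\mu\le3$, and $d_2=5$ rules out $h^0\ge3$).

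Finally, $E$ is an extension $0\ra M_i\ra E\ra\bigoplus_k M_{j_k}\ra0$ whose class lies in $\bigoplus_k\Ext^1(M_{j_k},M_i)$. The additivity $h^0(E)=h^0(M_i)+h^0(E')$ means the connecting map $H^0(E')\ra H^1(M_i)$ vanishes; on the $k$-th summand this is cup product with the component $e_k\in\Ext^1(M_{j_k},M_i)=H^1(M_{j_k}^*\otimes M_i)$. I claim each associated map $H^1(M_{j_k}^*\otimes M_i)\ra\Hom(H^0(M_{j_k}),H^1(M_i))$ is injective, so that $e_k=0$ and the extension splits. By Serre duality this injectivity is equivalent to the surjectivity of the multiplication map $H^0(M_{j_k})\otimes H^0(M_j)\ra H^0(M_{j_k}\otimes M_j)$, where $M_j=M_i^*\otimes K_C$; as $j_k,j\in\{1,2\}$, the four possibilities for $(M_{j_k},M_i)$ produce exactly the four multiplication maps $H^0(M_a)\otimes H^0(M_b)\ra H^0(M_a\otimes M_b)$ with $a,b\in\{1,2\}$, all surjective by Lemma \ref{lem3.1}. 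Hence every $e_k=0$, the extension splits, and $E\cong\bigoplus_{i=1}^n M_{j_i}$. Since $h^0(E)=2n$, the bundle $E$ also contributes to $\Cl_n(C)$, and $\gamma(E)=1=\Cl_n(C)$ shows it computes $\Cl_n(C)$.
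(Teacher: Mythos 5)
Your treatment of the case $\Hom(M_i,E)\neq 0$ is correct and is essentially the paper's own argument: the saturation argument forcing $\mu(E)=3$ and $M_i\subset E$, the passage to $E'=E/M_i$ computing $\gamma_{n-1}(C)$ with all sections lifting, the induction (with the same base case), and the splitting of the extension via the Serre-dual multiplication maps, all surjective by Lemma \ref{lem3.1}. The problem is the case you explicitly leave open: $\Hom(M_1,E)=\Hom(M_2,E)=0$ and $\mu(E)<3$. Your sketch (saturate the images of the $n$ independent maps $E\ra M_1$, pass to kernels, and bound $h^0$ and $\gamma$ of the rank-$(n-1)$ pieces) is not carried out, and it is far from clear it can be carried out as stated: those kernels are subbundles of $E$, hence of slope $<3$, but they need not be semistable, so neither $\gamma_{n-1}(C)=1$ nor Clifford-type bounds apply to them directly, and with $h^0(E)=n+s$ where $s$ may be as small as $1$, the numerics leave a lot of room. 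Since excluding this case is exactly what is needed to conclude $d_E=3n$ (equivalently, that $E$ contributes to, and hence computes, $\Cl_n(C)$), this is a genuine gap, not a routine verification.

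The paper closes precisely this case using an ingredient you never invoke: a bundle computing $\gamma_n(C)$ is generated (citing \cite[Theorem 2.4]{ln1}). Concretely, if $\Hom(M_1,E)=0$ then \eqref{eq3.1} gives $h^0(E^*\otimes M_2)\ge n$; either $E^*\otimes M_2\simeq\cO_C^{\oplus n}$ (so $E\simeq M_2^{\oplus n}$ and we are done), or some section of $E^*\otimes M_2$ vanishes at a point $p$, producing a non-zero map $E\ra M_2(-p)$. Since $h^0(M_2(-p))=1$ and $E$ is generated, the image of this map is a line bundle generated by a single section, hence trivial, so the map factors through $\cO_C$; but $h^0(E^*)=0$ by semistability (as $\mu(E)>0$), a contradiction. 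Note that this argument makes no assumption on $\mu(E)$, which is why the paper never needs your separate sub-case $\mu(E)=3$ (where your slope-zero argument is fine). To repair your proof you should either import the generation theorem of \cite{ln1}, or find some other mechanism to rule out $\mu(E)<3$; as written, the theorem is not proved.
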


\begin{proof}
We have $h^0(E) = n+s$ with $s \geq 1$. Since $\gamma_n(C) = 1$, it follows that $d_E = n + 2s$. On the other hand, by definition, $d_E \leq n(g-1) = 3n$. 
So $s \leq n$. 

By Lemma \ref{lem2.1} and the fact that $M_1 \otimes M_2 \simeq K_C$ we have
\begin{equation} \label{eq3.1}
h^0(E \otimes M_1^*) + h^0(E^* \otimes M_2) \geq n.
\end{equation}
Suppose first that there exists a non-zero homomorphism $M_1 \ra E$.
Then the semistability of $E$ implies that $M_1$ must be a subbundle of $E$ and $E$ must have degree $d_E = 3n$, i.e. $s=n$ and so $E$ computes $\Cl_n(C)$. 

If $n = 1$, this implies that $E \simeq M_1$. So suppose $n \geq 2$ and the theorem has been proved 
for all bundles of rank $n-1$. Then $E/M_1$ computes $\Cl_{n-1}(C)$ and by induction we have an exact sequence
$$
0 \ra M_1 \ra E \ra \oplus_{i=1}^{n-1} M_{j_i} \ra 0
$$
and all sections of $\oplus_{i=1}^{n-1} M_{j_i}$ lift to $E$. If this extension is non-trivial, the map 
$$
H^0(\oplus_{i=1}^{n-1} M_{j_i}) \otimes H^0(K_C \otimes M_1^*) \ra H^0(K_C \otimes M_1^* \otimes \oplus_{i=1}^{n-1} M_{j_i})
$$
is non-surjective. Since $K_C \otimes M_1^* \simeq M_2$, this contradicts Lemma \ref{lem3.1}. So the extension splits.

If there is no non-zero homomorphism $M_1 \ra E$, then by \eqref{eq3.1}, 
$$
h^0(E^* \otimes M_2) \geq n.
$$
If $E^* \otimes M_2 \simeq \cO_C^{\oplus n}$, the theorem follows. Otherwise, we have a map $\cO_C^{\oplus n} \ra E^* \otimes M_2$ which is not an isomorphism. 
Hence $E^* \otimes M_2$ has a section with a zero. So we have a non-zero homomorphism $\cO_C(p) \ra E^* \otimes M_2$ for some $p \in C$.
Dualizing we get $E \ra M_2(-p)$. Since $h^0(M_2(-p)) = 1$ and $E$ is generated by \cite[Theorem 2.4]{ln1}, this must factor as $E \ra \cO_C \ra M_2(-p)$.
But $h^0(E^*) = 0$, since $E$ is semistable. This is a contradiction.
\end{proof}

\begin{lem} \label{lem3.3}
 Let $C$ be a curve of type {\em (ii)}. Then there exists a non-trivial extension
$$
0 \ra M \ra F \ra M \ra 0
$$
with $h^0(F) = 4$, unique up to a scalar multiple.
\end{lem}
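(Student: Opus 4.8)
The plan is to parametrise all such extensions and single out those with the prescribed number of sections. Extensions of $M$ by $M$ are classified by $\Ext^1(M,M)\cong H^1(M^*\otimes M)\cong H^1(\cO_C)$, which has dimension $g=4$. Given a class $e\in H^1(\cO_C)$ with associated bundle $F$, the long exact cohomology sequence of $0\ra M\ra F\ra M\ra 0$ begins
$$
0\ra H^0(M)\ra H^0(F)\ra H^0(M)\xrightarrow{\ \delta_e\ } H^1(M),
$$
so that $h^0(F)=h^0(M)+\dim\ker\delta_e=2+\dim\ker\delta_e$. Since $h^0(M)=2$, the requirement $h^0(F)=4$ is exactly the vanishing $\delta_e=0$. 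Thus the lemma amounts to computing the locus of classes $e$ for which $\delta_e=0$ and showing it is one-dimensional (so that, modulo the split class $e=0$, there is a unique nonzero direction).

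Next I would identify $\delta_e$ as cup product with $e$, namely $\delta_e(s)=e\cup s$ for the pairing $H^1(\cO_C)\otimes H^0(M)\ra H^1(M)$. Using $K_C\simeq M^2$ one has $h^1(M)=h^0(K_C\otimes M^*)=h^0(M)=2$, and Serre duality gives $H^1(M)^*\cong H^0(K_C\otimes M^*)=H^0(M)$. The standard compatibility of cup product with Serre duality then yields, for $s,t\in H^0(M)$,
$$
\langle e\cup s,\,t\rangle=\langle e,\,s\cdot t\rangle,
$$
where $s\cdot t\in H^0(M^2)=H^0(K_C)\cong H^1(\cO_C)^*$ and the right-hand pairing is Serre duality. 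Hence $\delta_e=0$ if and only if $e$ lies in the annihilator $W^{\perp}\subset H^1(\cO_C)$ of the image $W$ of the multiplication map $H^0(M)\otimes H^0(M)\ra H^0(M^2)=H^0(K_C)$.

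Finally I would compute $\dim W$. Writing $H^0(M)=\langle s_0,s_1\rangle$, the image $W$ is spanned by $s_0^2,\,s_0s_1,\,s_1^2$. Because $M$ defines a degree-$3$ map to $\PP^1$, the rational function $s_0/s_1$ is non-constant, so $1,\,s_0/s_1,\,(s_0/s_1)^2$ are linearly independent over the ground field; dividing a putative relation by $s_1^2$ shows $s_0^2,\,s_0s_1,\,s_1^2$ are linearly independent and $\dim W=3$. As $\dim H^1(\cO_C)=4$, the annihilator $W^{\perp}$ is one-dimensional. It contains the zero class (the split extension), so its nonzero elements form a single line, and any such class produces a non-trivial extension with $h^0(F)=4$, unique up to scalar. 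The step demanding the most care is the Serre-duality identification of $\delta_e$ together with the injectivity of the multiplication map, since it is precisely this injectivity — forcing $\dim W=3$ rather than $\le 3$ — that makes $W^{\perp}$ exactly one-dimensional and thereby pins down uniqueness.
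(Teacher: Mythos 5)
Your proof is correct, and its overall framework coincides with the paper's: both identify the classes $e\in\Ext^1(M,M)\cong H^1(\cO_C)$ for which all sections lift as the annihilator (equivalently, the dual of the cokernel) of the multiplication map $H^0(M)\otimes H^0(K_C\otimes M^*)\ra H^0(K_C)$, where $K_C\otimes M^*\simeq M$, and then show this annihilator is one-dimensional. The difference lies in how the rank of the multiplication map is computed. The paper identifies the \emph{kernel} of the multiplication map with $H^0(K_C\otimes M^{*2})\cong H^0(\cO_C)$ (the base-point-free pencil trick, via the evaluation sequence of $M$ as in Lemma \ref{lem3.1}), so the image has dimension $4-1=3$ and the cokernel dimension $1$. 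You instead compute the \emph{image} directly: it is spanned by $s_0^2,s_0s_1,s_1^2$, and these are independent because $s_0/s_1$ is a non-constant rational function on $C$, hence transcendental over the algebraically closed base field. Your route is more elementary (it avoids invoking the pencil trick), and it makes explicit the cup-product/Serre-duality identification that the paper leaves implicit; the paper's kernel computation is the one that generalises more readily (it is the same mechanism used in Lemmas \ref{lem2.2} and \ref{lem3}, where the relevant line bundles are no longer pencils spanning the image by monomials in two sections).
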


\begin{proof}
We need to show that the cokernel of the map
$$
H^0(M) \otimes H^0(K_C \otimes M^*) \ra H^0(K_C)
$$ 
is of dimension 1. The kernel of this map is $H^0(K_C \otimes M^{*2}) = H^0(\cO_C)$
and so of dimension 1. Hence also the cokernel is of dimension 1.
\end{proof}

\begin{theorem} \label{thm3.4}
Let $C$ be a curve of type {\em (ii)} and $E$ a bundle computing $\gamma_n(C)$. Then $E$ is a multiple extension of copies of $M$.
In particular $E$ computes $\Cl_n(C)$.  
\end{theorem}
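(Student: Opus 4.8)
The plan is to mimic the proof of Theorem~\ref{thm3.2}, noting that the two line bundles there now coincide ($M_1=M_2=M$, with $K_C\simeq M^2$), so that the extensions which arise can no longer be split off and the conclusion becomes ``multiple extension'' rather than ``direct sum''. As before, write $h^0(E)=n+s$ with $s\ge1$; since $\gamma(E)=\gamma_n(C)=1$ we have $d_E=n+2s$, and $\mu(E)\le g-1=3$ gives $s\le n$. Because $g=4$ and $K_C\simeq M^2$, Lemma~\ref{lem2.1} applied with $T=M$ yields
$$
h^0(E\otimes M^*)+h^0(E^*\otimes M)\ge n.
$$

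First I would show that $\Hom(M,E)=h^0(E\otimes M^*)\neq0$; this is the step at which the dichotomy of Theorem~\ref{thm3.2} collapses, and it is the main obstacle. Suppose instead that $h^0(E\otimes M^*)=0$, so that $h^0(E^*\otimes M)\ge n$. The $n$ independent sections give a homomorphism $\cO_C^{\oplus n}\ra E^*\otimes M$ between bundles of the same rank. If it is an isomorphism then $E\simeq M^{\oplus n}$, which certainly admits non-zero maps $M\ra E$, contradicting the hypothesis; this is exactly the branch that in type (i) produced $E\simeq M_2^{\oplus n}$ but which here is excluded. Otherwise, just as in the last paragraph of the proof of Theorem~\ref{thm3.2}, one obtains a non-zero homomorphism $E\ra M(-p)$ for some $p\in C$; since $E$ is generated by \cite[Theorem 2.4]{ln1} and $h^0(M(-p))=1$ (as $M$ is base-point free, $C$ being non-hyperelliptic), this factors through $\cO_C$ and hence produces a non-zero section of $E^*$. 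But $d_E\ge n+2>0$, so semistability forces $h^0(E^*)=0$, a contradiction.

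Once a non-zero homomorphism $M\ra E$ is in hand, semistability together with $\deg M=3\ge\mu(E)$ forces its image to saturate to a line subbundle of degree $3$; hence $\mu(E)=3$, $M$ is a subbundle of $E$, $d_E=3n$, $s=n$ and $h^0(E)=2n$, so that $E$ computes $\Cl_n(C)$. I would then induct on $n$, the case $n=1$ giving $E\simeq M$. For $n\ge2$, apply the above to
$$
0\ra M\ra E\ra E/M\ra0,
$$
observing that $E/M$ is semistable of rank $n-1$ and slope $3$ with $h^0(E/M)\ge h^0(E)-h^0(M)=2(n-1)$, so that $\gamma(E/M)\le1$; since $\gamma(E/M)\ge\gamma_{n-1}(C)=1$, equality holds and $E/M$ computes $\gamma_{n-1}(C)$. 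By the inductive hypothesis $E/M$ is a multiple extension of copies of $M$, and therefore so is $E$. In contrast to Theorem~\ref{thm3.2}, I would make no attempt to split these extensions: by Lemma~\ref{lem3.3} a non-split extension of $M$ by $M$ with $h^0=4$ exists, so ``multiple extension of copies of $M$'' is the best possible statement.
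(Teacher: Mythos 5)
Your proof is correct and takes essentially the same route as the paper's: the paper's own proof of Theorem \ref{thm3.4} simply reruns the argument of Theorem \ref{thm3.2} (Lemma \ref{lem2.1} with $T=M$, the dichotomy between $\Hom(M,E)\neq 0$ and $h^0(E^*\otimes M)\geq n$, the generation argument from \cite[Theorem 2.4]{ln1}, and induction on $n$), omitting the splitting step since only ``multiple extension'' is claimed. Your reorganization --- establishing $\Hom(M,E)\neq 0$ outright by folding the $E\simeq M^{\oplus n}$ subcase into a contradiction with the no-homomorphism assumption --- is a cosmetic variant of the same argument, and your concluding remark that Lemma \ref{lem3.3} shows the result is sharp matches the paper's intent.
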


\begin{proof}
For $n=1$ this is obvious. So suppose $n \geq 2$ and it is proved for bundles of rank $n-1$. In the same way as in the proof of the previous theorem we see 
that either there exists an injective homomorphism $M \ra E$ and $E$ computes $\Cl_n(C)$ or $h^0(E^* \otimes M) \geq n$. The argument is completed as in the proof of Theorem 
\ref{thm3.2}.
\end{proof}

\section{Trigonal curves of genus $\geq 5$}\label{g5}

Let $C$ be a trigonal curve of genus $g \geq 5$.

\begin{lem} \label{lem1}
Suppose $E$ is a semistable bundle of rank $n$ on $C$ with $d_E \leq n(g-1)$. Let $L$ be a line subbundle of $E$. Then every subbundle $F$ of $E/L$ has
$$
\mu(F) \leq 2g -2 - d_L.
$$
\end{lem}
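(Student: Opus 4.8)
The plan is to lift $F$ to a subbundle of $E$ and then exploit the semistability of $E$ twice. First I would form the preimage $G$ of $F$ under the quotient map $E \ra E/L$. Since $F$ is a subbundle (so that $(E/L)/F$ is locally free), $G$ is a subbundle of $E$ fitting into an exact sequence
$$
0 \ra L \ra G \ra F \ra 0,
$$
whence $\rk G = \rk F + 1$ and $d_G = d_L + d_F$.

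The key preliminary observation is that the hypotheses force $d_L \leq g-1$. Indeed, $L$ is a line subbundle of the semistable bundle $E$, so $d_L = \mu(L) \leq \mu(E)$, while $\mu(E) = \frac{d_E}{n} \leq g-1$ by assumption. Hence $g-1-d_L \geq 0$, and this nonnegativity is exactly what makes the estimate work.

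Next I would apply semistability to $G$ itself, giving $\mu(G) \leq \mu(E) \leq g-1$, that is,
$$
\frac{d_L + d_F}{\rk F + 1} \leq g-1.
$$
Solving for $d_F$ and dividing by $\rk F$ yields
$$
\mu(F) \leq (g-1) + \frac{(g-1) - d_L}{\rk F}.
$$
Because $(g-1)-d_L \geq 0$ and $\rk F \geq 1$, the fractional term is at most $(g-1)-d_L$, and the desired bound $\mu(F) \leq 2g-2-d_L$ follows immediately.

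There is no serious obstacle here: the lemma is a direct consequence of semistability, and neither the trigonal structure nor the precise value $g\ge5$ is needed beyond fixing the ambient setting. The one point that must not be overlooked is the reduction $d_L \leq g-1$; without it the factor $\frac{1}{\rk F}$ could not be absorbed and one would recover only the weaker estimate $\mu(F) \leq (g-1) + \frac{(g-1)-d_L}{\rk F}$. I would also take care that $F$ is saturated, so that its preimage $G$ is genuinely a subbundle of $E$. If $F$ were merely a subsheaf, one could replace it by its saturation $\bar F \supseteq F$, which is a subbundle with $\mu(\bar F) \geq \mu(F)$; hence it suffices to treat subbundles, as above.
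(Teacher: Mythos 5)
Your proof is correct and follows essentially the same route as the paper: pull $F$ back to a rank $(\rk F+1)$ subbundle of $E$, apply semistability of $E$, and use $d_L \leq g-1$ together with $\mu(E)\leq g-1$ to absorb the $\frac{1}{\rk F}$ term. The only (immaterial) difference is that you replace $\mu(E)$ by $g-1$ one step earlier than the paper does.
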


\begin{proof}
Since $E$ is semistable, $d_L \leq g-1$. Suppose $F$ has rank $r$. The pullback of $F$ to $E$ is a subbundle of rank $r+1$ and degree $d_F + d_L$. By semistability of $E$,
$$
\frac{d_F + d_L}{r+1} \leq \frac{d_E}{n}.
$$ 
So $d_F \leq (r+1) \frac{d_E}{n} -d_L$ and
\begin{eqnarray*}
\mu(F) & \leq & \left( 1 + \frac{1}{r} \right) \frac{d_E}{n}  - \frac{d_L}{r}\\
&=& \frac{d_E}{n} + \frac{1}{r} \left( \frac{d_E}{n} - d_L \right)\\
&\leq & g-1 + \frac{1}{r} (g-1-d_L)\\
& \leq & 2g-2-d_L.
\end{eqnarray*}
\end{proof}

\begin{lem} \label{lem2}
Suppose $E$ computes $\gamma_n(C)$. Then there exists a non-zero homomorphism $T \ra E$. In particular, $d_E\ge3n$ and $E$ computes $\Cl_n(C)$.
\end{lem}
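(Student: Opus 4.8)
The plan is to deduce everything from Lemma \ref{lem2.1}. Since $E$ computes $\gamma_n(C)=1$ we have $\gamma(E)=1$, so Lemma \ref{lem2.1} gives
$$
h^0(E\otimes T^*)+h^0(E^*\otimes K_C\otimes T^*)\ge n(g-3)>0 .
$$
A non-zero section of $E\otimes T^*$ is exactly a non-zero homomorphism $T\ra E$; as $T$ is a line bundle such a map is injective, and by semistability the saturation of its image is a line subbundle of degree $\ge3$, forcing $\mu(E)\ge3$, i.e. $d_E\ge3n$. Then $h^0(E)=\frac12(d_E+n)\ge2n$, so $E$ contributes to $\Cl_n(C)$ and, having $\gamma(E)=1$, computes it. Thus the whole content of the lemma is the existence of the homomorphism, i.e. that $h^0(E\otimes T^*)\neq0$ (for $n=1$ this is already contained in Lemma \ref{lem2.2}).

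I would therefore argue by contradiction and assume $h^0(E\otimes T^*)=0$. The first step is to pin down $\mu(E)$. From $h^0(E\otimes T^*)=0$ and the evaluation sequence tensored with $E$ one gets $h^0(E\otimes T)\ge2h^0(E)$, whence a short computation using $\gamma(E)=1$ gives $\gamma(E\otimes T)\le3-\mu(E)$. On the other hand $\gamma(V)=\gamma(V^*\otimes K_C)$ for every bundle $V$, and the slopes of $E\otimes T$ and of $(E\otimes T)^*\otimes K_C=E^*\otimes K_C\otimes T^*$ sum to $2g-2$; hence one of these two bundles has slope $\le g-1$, and since both carry at least $n+1$ sections that one contributes to $\gamma_n(C)$. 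Therefore $\gamma(E\otimes T)\ge\gamma_n(C)=1$, and combining the two estimates yields $\mu(E)\le2$.

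It remains to exclude $\mu(E)\le2$, which I expect to be the main obstacle. Here I would use that $E$ is globally generated (\cite[Theorem 2.4]{ln1}) and pass to the dual span bundle $0\ra M_E\ra H^0(E)\otimes\cO_C\ra E\ra0$. Writing $s=h^0(E)-n\ge1$, the bundle $M_E^*$ is globally generated of rank $s$ and degree $d_E$ with $h^0(M_E^*)\ge n+s$, so that
$$
\gamma(M_E^*)\le\frac{n}{s}\bigl(\mu(E)-2\bigr)\le0 .
$$
When $s=1$ this is immediate, since $M_E^*$ is then a line bundle of degree $d_E=n+2$ carrying $n+1$ sections, which is impossible for $n\ge2$ because $d_n>n+2$ by the gonality sequence \eqref{eq1.1}. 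For $s\ge2$ the idea is that Clifford's theorem for semistable bundles forces $\gamma(M_E^*)\ge0$, with equality only in the hyperelliptic case; since $\mu(E)>0$ gives $\mu(M_E^*)>0$ and $M_E^*$ lies in the Clifford range, this contradicts trigonality (a strict contradiction if $\mu(E)<2$, and via the equality case if $\mu(E)=2$). The genuinely delicate point—and the hardest step—is the semistability of $M_E$ (equivalently, the applicability of the Clifford bound to $M_E^*$) for $s\ge2$; this is exactly where the extremality $\gamma(E)=\Cl(C)$ and the non-hyperelliptic hypothesis must be used, either through a direct Clifford-type estimate for globally generated bundles or by proving semistability of the syzygy bundle of a bundle computing the Clifford index.
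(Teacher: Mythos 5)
Your first two steps are correct, and they take a genuinely different route from the paper. The deduction of $d_E\ge 3n$ and $h^0(E)\ge 2n$ from the existence of a non-zero map $T\ra E$ is exactly as in the paper, and your reduction to $\mu(E)\le 2$ is clean and is \emph{not} in the paper at all: from $h^0(E\otimes T^*)=0$ you correctly get $h^0(E\otimes T)\ge 2h^0(E)$, hence $\gamma(E\otimes T)\le 3-\mu(E)$; both $E\otimes T$ and $E^*\otimes K_C\otimes T^*$ are semistable (twists of $E^{\pm}$ by line bundles) with at least $n+1$ sections (the latter because Lemma \ref{lem2.1} gives $h^0\ge n(g-3)\ge n+1$ for $g\ge5$), their slopes sum to $2g-2$, so one of them contributes to $\gamma_n(C)$ and $\gamma(E\otimes T)=\gamma(E^*\otimes K_C\otimes T^*)\ge 1$, forcing $\mu(E)\le2$. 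Your $s=1$ case is also fine, since \eqref{eq1.1} gives $d_n>n+2$ for $n\ge2$, and $n=1$ is Lemma \ref{lem2.2}.

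However, the case $s\ge2$, $\mu(E)\le 2$ is a genuine gap, and you acknowledge rather than close it. Clifford's theorem in the form $\gamma\ge 0$, with equality only on hyperelliptic curves, is a statement about \emph{semistable} bundles, and the semistability of the dual span bundle $M_E$ is precisely what is missing: Butler's theorem \cite{b} gives semistability of $M_E$ only when $\mu(E)\ge 2g$, and at slope $\le 2$ (your range) no such general result exists — this is essentially an instance of Butler's conjecture, and there is likewise no citable ``Clifford theorem for globally generated bundles'' without a stability hypothesis. (Even granting semistability of $M_E^*$, you would still need to check $\mu(M_E^*)\le 2g-2$ before invoking Clifford, since $\mu(M_E^*)=\frac{d_E}{s}$ can a priori be large when $s$ is small relative to $n$.) So the hardest part of the lemma remains unproved. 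Note that this step can be closed without $M_E$ at all, by applying the small-slope Brill--Noether results directly to the semistable bundle $E$, exactly as the paper does in genus $3$ (Proposition \ref{prop5.2}): for $\mu(E)<2$, \cite{bgn,m1} give $d_E-n\ge 3s$, hence $\gamma(E)\ge 1+\frac{s}{n}>1$, a contradiction; for $\mu(E)=2$ one needs Mercat's slope-$2$ theorem \cite{m2} together with a Jordan--H\"older argument and a check that its exceptional bundles do not occur here. By contrast, the paper's own proof never establishes $\mu(E)\le2$: it twists $E^*\otimes K_C\otimes T^*$ down by $g-4$ general points, produces a non-zero map $E\ra K_C\otimes T^*(-p_1-\cdots-p_{g-4}-q)$, and derives a contradiction from \cite[III, Exercise B-5]{acgh} and Martens' theorem.
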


\begin{proof}
If there exists no non-zero homomorphism $T\to E$, then $h^0(E^* \otimes K_C \otimes T^*) \geq n(g-3)$ by Lemma \ref{lem2.1}. So, for 
all $p_1, \dots, p_{g-4} \in C$,
$$
h^0(E^* \otimes K_C \otimes T^*(-p_1 - \cdots -p_{g-4})) \geq n.
$$ 
If $E^* \otimes K_C \otimes T^*(-p_1 - \cdots - p_{g-4}) \simeq \cO_C^{\oplus n}$, then 
$$
E \simeq (K_C \otimes T^*(-p_1 - \cdots - p_{g-4}))^{\oplus n}.
$$
For general $p_1, \dots, p_{g-4}, \; h^0(K_C \otimes T^*(-p_1 - \cdots - p_{g-4})) = 2$, so
$\gamma(E) = g - 1 - 2 > 1$, a contradiction.
Hence there is a $q \in C$ such that there exists a non-zero homomorphism
$$
E \ra K_C \otimes T^*(-p_1 - \cdots - p_{g-4} -q).
$$
Since $h^0(E^*) = 0$ and $E$ is generated, 
$$
h^0(K_C \otimes T^*(-p_1 - \cdots - p_{g-4} -q)) \geq 2 \quad \mbox{and hence} \quad = 2.
$$
It follows that for general $r \in C$ we have 
$$
h^0(K_C \otimes T^*(-p_1- \cdots - p_{g-4} -q + r)) = 2.
$$ 
By \cite[III, Exercise B-5]{acgh} either $K_C \otimes T^*(-p_1 - \cdots - p_{g-4} -q + r)$  or its Serre dual contains $T$. 

If $K_C \otimes T^*(-p_1 - \cdots - p_{g-4} -q + r) \supset T$, then
$$
K_C \otimes T^{*2} = \cO_C (p_1 + \cdots +  p_{g-4} + q - r + p_1' + \cdots + p_{g-4}')
$$
for some points $p_1', \dots p_{g-4}'$ of $C$. So $K_C \otimes T^{*2}(r)$ has a section vanishing at arbitrary general points 
$p_1, \dots, p_{g-4}$ which implies 
$h^0(K_C \otimes T^{*2}(r)) \geq g-3$. However
$$
h^0(K_C \otimes T^{*2}(r)) = h^0(T^2(-r)) + 2g-7 -g+1 = g-4,
$$
since $h^0(T^2(-r)) = 2$ for general $r$.

Hence $T(p_1 + \cdots + p_{g-4} + q - r) \supset T$ and so
$$
h^0(\cO_C(p_1 + \cdots + p_{g-4}+ q - r)) \geq 1.
$$
Since $r$ is general and independent of $p_1, \dots, p_{g-4}$, this implies that
$$
h^0(\cO_C(p_1 + \cdots + p_{g-4} + q)) \geq 2 \quad \mbox{and hence} \quad = 2.
$$
So $\dim W^1_{g-3} \geq g-5$. But by Martens' theorem (see \cite[IV, Theorem 5.1]{acgh}), $\dim W^1_{g-3} \leq g-6$. 

The assertion that $d_E\ge3n$ follows from the semistability of $E$. But then the assumption that $\gamma(E)=1$ implies that  $h^0(E)\ge2n$.
\end{proof}

\begin{rem} {\em
For $g=5$ there is a much simpler proof. Here we have to prove that for general $p_1 \in C$ and arbitrary $q \in C$ we have
$$
h^0(K_C \otimes T^*(-p_1 - q)) \leq 1,
$$
which means that $K_C \otimes T^*(-p_1 -q) \not \simeq T$, i.e. $K_C \otimes T^{*2} \not \simeq \cO_C(p_1 + q)$.
But this is clearly true for general $p_1$.
} 
\end{rem}

\begin{rem}
{\em Note that there may exist bundles $E$ with $d_E<3n$ which contribute to $\gamma_n(C)$. In fact, taking into account \eqref{eq1.1}, it follows from \cite[Corollary 4.12]{ln} that the smallest degree for which such $E$ exists is $d_n$. Moreover $d_n\le3n$ and this inequality is strict if $n>\left[\frac{g}3\right]$. In this case, it follows from \cite[Theorem 4.15(a)]{ln} that $h^0(E)=n+1$, so $\gamma(E)=\frac{d_n-2}n>1$.}\end{rem}

\begin{lem} \label{lem3}
Let $C$ be a trigonal curve of genus $g \geq 5$ such that the map
\begin{equation} \label{eq4.1}
S^2H^0(K_C \otimes T^*) \ra H^0(K_C^2 \otimes T^{*2}) 
\end{equation}
is surjective. Then there exists no non-trivial extension
\begin{equation*}
0 \ra T \ra E \ra \oplus_{i=1}^rK_C \otimes T^* \ra 0
\end{equation*}
in which all sections of $\oplus_{i=1}^rK_C \otimes T^*$ lift to $E$.  
\end{lem}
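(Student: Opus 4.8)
The plan is to translate the lifting condition into a statement about cup products and then dualise, exactly as in the proof of Lemma \ref{lem2.2}. Write $Q := \oplus_{i=1}^r K_C \otimes T^*$, so the extension is classified by a class $e \in \Ext^1(Q,T) = H^1(Q^* \otimes T)$. Taking cohomology of
$$
0 \ra T \ra E \ra Q \ra 0
$$
produces the connecting homomorphism $\delta\colon H^0(Q) \ra H^1(T)$, and all sections of $Q$ lift to $E$ precisely when $\delta = 0$. Since $\delta$ is cup product with $e$, an $E$ that is non-trivial yet has all sections lifting would furnish a non-zero class $e$ in the kernel of the natural map
$$
\Ext^1(Q,T) \ra \Hom(H^0(Q), H^1(T)).
$$
So it suffices to prove that this map is injective; then $\delta = 0$ forces $e=0$, i.e. the extension splits.

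Next I would invoke Serre duality, as in Lemma \ref{lem2.2}, to convert this injectivity into a surjectivity statement. Here $H^1(T)^* \cong H^0(K_C \otimes T^*)$ and $\Ext^1(Q,T)^* = H^1(Q^* \otimes T)^* \cong H^0(K_C \otimes Q \otimes T^*)$, while the dual of $\Hom(H^0(Q),H^1(T))$ is $H^0(Q) \otimes H^0(K_C \otimes T^*)$. Under these identifications the transpose of the map above is the multiplication map
$$
H^0(Q) \otimes H^0(K_C \otimes T^*) \ra H^0(Q \otimes K_C \otimes T^*),
$$
so the required injectivity is equivalent to the surjectivity of this multiplication.

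Finally, because $Q$ is a direct sum of $r$ copies of $K_C \otimes T^*$, this multiplication map decomposes as the direct sum of $r$ copies of
$$
H^0(K_C \otimes T^*) \otimes H^0(K_C \otimes T^*) \ra H^0(K_C^2 \otimes T^{*2}),
$$
and hence is surjective provided each such factor is. Since the tensor-square map factors through $S^2H^0(K_C \otimes T^*)$, its surjectivity is immediate from the hypothesis \eqref{eq4.1}, completing the argument. The step I expect to be the genuine obstacle is the Serre-duality identification of the second paragraph: one must check that the transpose of cup-product-with-$e$ really is the multiplication map, i.e. verify the compatibility of cup product with Serre duality (the same point that is used tacitly in Lemma \ref{lem2.2}). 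The decomposition over the direct sum and the passage from $S^2$ to the full tensor product are routine.
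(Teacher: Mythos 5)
Your proposal is correct and follows essentially the same route as the paper: the paper likewise reduces the lifting condition to injectivity of $\Ext^1(K_C\otimes T^*,T) \ra \Hom(H^0(K_C\otimes T^*),H^1(T))$, dualises to surjectivity of the multiplication map $H^0(K_C\otimes T^*)\otimes H^0(K_C\otimes T^*)\ra H^0(K_C^2\otimes T^{*2})$, and concludes from \eqref{eq4.1}. The only difference is that you spell out the reduction to $r=1$ via the direct-sum decomposition and the factoring through $S^2$, steps the paper leaves implicit in its opening phrase ``It is sufficient to show.''
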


\begin{rem} {\em
The surjectivity of the map \eqref{eq4.1} is equivalent to the normal generation of the line bundle $K_C \otimes T^*$. 
}
\end{rem}

\begin{proof}
It is sufficient to show that the map
$$
H^1((K_C\otimes T^*)^* \otimes T) \ra \Hom (H^0(K_C\otimes T^*),H^1(T))
$$
is injective or equivalently 
$$
H^0(K_C\otimes T^*) \otimes H^0(K_C \otimes T^*) \ra H^0(K_C^2\otimes T^{*2})
$$
is surjective. This holds by \eqref{eq4.1}.
\end{proof}

\begin{theorem} \label{thm4.4}
Let $C$ be a trigonal curve of genus $g \geq 5$ for which the map $S^2H^0(K_C \otimes T^*) \ra H^0(K_C^2 \otimes T^{*2})$ is surjective and $E$ a bundle computing $\gamma_n(C)$. Then
$$
E \simeq \oplus_{i=1}^ n T.
$$
\end{theorem}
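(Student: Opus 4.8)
The plan is to argue by induction on the rank $n$, mirroring the proof of Theorem \ref{thm3.2} but with $T$ and $K_C\otimes T^*$ playing the roles of $M_1$ and $M_2$. For $n=1$, a bundle computing $\gamma_1(C)=\Cl(C)$ computes $\Cl(C)$ by Lemma \ref{lem2}, so Lemma \ref{lem2.2} forces $E\simeq T$. So assume $n\ge2$ and that the statement holds in rank $n-1$.

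For the inductive step I would first use Lemma \ref{lem2} to produce a non-zero homomorphism $T\ra E$; as $T$ is a line bundle this is injective, and since $E$ computes $\Cl_n(C)$ we have $h^0(E)\ge2n$, so writing $h^0(E)=n+s$ the condition $\gamma(E)=1$ gives $d_E=n+2s\ge3n$. Let $L$ be the line subbundle generated by the image of $T$; then $h^0(L)\ge2$, hence $d_L\ge d_1=3$ by \eqref{eq1.1}, while semistability gives $d_L\le\mu(E)\le g-1$, so $L$ contributes to $\Cl(C)$ and $\gamma(L)\ge1$. Comparing Clifford invariants across $0\ra L\ra E\ra E/L\ra0$ yields $\gamma(L)+(n-1)\gamma(E/L)\le n\gamma(E)=n$, with equality precisely when every section of $E/L$ lifts to $E$. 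Thus, \emph{provided $E/L$ contributes to} $\gamma_{n-1}(C)$, we get $\gamma(E/L)\ge1$, which forces $\gamma(L)\le1$, hence $\gamma(L)=1$; by Lemma \ref{lem2.2} this gives $L\simeq T$, while simultaneously $\gamma(E/L)=1$ and all sections of $Q:=E/T$ lift, so $Q$ computes $\gamma_{n-1}(C)$.

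The crux, and the main obstacle, is exactly the proviso: that $Q=E/T$ is semistable with $\mu(Q)\le g-1$. The difficulty is a circularity, since $\mu(Q)=(d_E-3)/(n-1)$ satisfies $\mu(Q)\le g-1$ only when $d_E$ is close to $3n$, whereas a priori only $3\le\mu(E)\le g-1$ is known and $\mu(Q)$ could exceed $g-1$ if $d_E>3n$. Here Lemma \ref{lem1} controls one side: every subbundle $F$ of $Q$ has $\mu(F)\le 2g-2-d_T=2g-5=\deg(K_C\otimes T^*)$, and the extremal destabilizing pieces are copies of $K_C\otimes T^*$ — which is precisely why that bundle enters the statements of Lemmas \ref{lem2.2} and \ref{lem3}. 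The plan is to combine this slope cap with the hypothesis that $K_C\otimes T^*$ is normally generated, so that Lemma \ref{lem3} excludes any non-trivial extension having a $K_C\otimes T^*$-quotient; this should rule out such destabilizing quotients of $Q$, force $\mu(Q)=3$ (equivalently $d_E=3n$, $s=n$), and leave $Q$ semistable of slope $3$. Then $Q$ computes $\gamma_{n-1}(C)$ and, by the inductive hypothesis, $Q\simeq\oplus_{i=1}^{n-1}T$ with all its sections lifting to $E$.

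It then remains to split the extension $0\ra T\ra E\ra\oplus_{i=1}^{n-1}T\ra0$. Its class lies in $\Ext^1(\oplus_{i=1}^{n-1}T,\,T)\simeq\bigoplus_{i=1}^{n-1}H^1(T^*\otimes T)$, and the fact that all sections of the quotient lift says exactly that the image of this class under the map $H^1(T^*\otimes T)\ra\Hom(H^0(T),H^1(T))$ vanishes in each factor. Since that map is injective by Lemma \ref{lem2.2}, the class is zero, the extension splits, and $E\simeq\oplus_{i=1}^nT$. I expect the genuinely hard point to be the middle paragraph: breaking the circularity between wanting $\mu(Q)\le g-1$ (so that $Q$ contributes to $\gamma_{n-1}(C)$) and proving $d_E=3n$, a step for which Lemma \ref{lem1} and the normal generation hypothesis — acting through the excluded $K_C\otimes T^*$-extensions of Lemma \ref{lem3} — appear to be indispensable.
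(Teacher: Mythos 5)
Your skeleton (induction on $n$, Lemma \ref{lem2} to embed $T$ in $E$, the additivity inequality $\gamma(L)+(n-1)\gamma(E/L)\le n\gamma(E)$ with equality exactly when sections lift, and the final splitting via the injectivity statement inside Lemma \ref{lem2.2}) agrees with the paper, and your base case and last paragraph are correct. But the middle paragraph --- which you yourself flag as the hard point --- is a genuine gap, and the fix you sketch would not work as stated. Lemma \ref{lem3} cannot ``rule out destabilizing quotients of $Q=E/T$'': it applies only once you already have an extension of exactly $\oplus_{i=1}^r K_C\otimes T^*$ by $T$ in which all sections of the quotient lift, and at that stage of your argument you know neither that a destabilizing piece of $E/T$ is a direct sum of copies of $K_C\otimes T^*$ (Lemma \ref{lem1} only bounds its slope by $2g-5$; it does not identify the bundle) nor that any sections lift. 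Moreover, your intended conclusion --- that $E/T$ is semistable of slope $3$, i.e. $d_E=3n$ --- is never established directly in the paper; a priori $E/T$ can fail to be semistable, and the paper's proof accommodates this.

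What the paper actually does to break the circularity is to pass to the Harder--Narasimhan filtration of $E/\widetilde T$. Lemma \ref{lem1} bounds the first HN quotient above, and semistability of $E$ bounds the last one below, so \emph{every} HN quotient $Q$ satisfies $3\le\mu(Q)\le 2g-5$. The key idea missing from your proposal is Serre duality applied to these quotients: if $\mu(Q)\le g-1$ one asks whether $Q$ contributes to $\gamma_{\rk Q}(C)$, while if $\mu(Q)>g-1$ one asks this of $K_C\otimes Q^*$, whose slope $2g-2-\mu(Q)$ lies in $[3,g-1)$ precisely because $\mu(Q)\le 2g-5$; if neither contributes, then $h^0(Q)\le\rk Q$ or $h^1(Q)\le\rk Q$, and either forces $\gamma(Q)\ge3$. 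Hence every HN quotient has $\gamma\ge1$, and since $\gamma(E)=1$, additivity forces $\widetilde T=T$, $\gamma(Q)=1$ for every quotient, and all sections to lift at every stage. The inductive hypothesis is then applied to each HN quotient \emph{or to its Serre dual}, identifying it as $\oplus T$ or $\oplus(K_C\otimes T^*)$; since HN slopes strictly decrease, $E/T$ sits in an exact sequence $0\ra\oplus^r K_C\otimes T^*\ra E/T\ra\oplus^s T\ra0$. Only now do Lemmas \ref{lem2.2} and \ref{lem3} enter (the cases $r=0$ and $s=0$), and the mixed case $r,s\ge1$ is eliminated not by Lemma \ref{lem3} but by pulling back the subbundle $\oplus^r K_C\otimes T^*$ to $E$ and contradicting semistability directly. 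Without this HN-plus-Serre-duality step, your induction has nothing to run on.
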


\begin{proof}
For $n = 1$, the result is proved in Lemma \ref{lem2.2}. By induction we assume $n \geq 2$ and 
that the theorem is proved for bundles of rank $\leq n-1$. 

According to Lemma \ref{lem2}, there is a non-zero homomorphism $T \ra E$. Let $\widetilde T$ be the 
line subbundle generated by $T$. We apply Lemma \ref{lem1} with $L = \widetilde T$. 
Then every subbundle $F$ of $E/\widetilde T$ has $\mu(F) \leq 2g-5$. Moreover, every quotient bundle of $E/\widetilde T$ has slope $\geq \mu(E) \geq 3$ by semistability of $E$. So, if $Q$ is any quotient in the Harder-Narasimhan filtration of $E/\widetilde T$,
$$
3 \leq \mu(Q) \leq 2g-5.
$$ 

If neither $Q$ nor $K_C\otimes Q^*$ contributes to the appropriate Clifford index, either $h^0(Q) \leq \rk Q$ or $ h^1(Q) \leq \rk Q$. In either case $\gamma(Q) \geq 3$. On the other hand, if $Q$ or $K_C\otimes Q^*$ contributes to the Clifford index, $\gamma(Q) \geq 1$. So all quotients have $\gamma \ge 1$. 

Since $\gamma(E)=1$, it follows that $\gamma(\widetilde T)=1$ and so $\widetilde{T}=T$. Moreover, for every quotient $Q$, either $Q$ or $K_C\otimes Q^*$ computes the appropriate Clifford index 
and all sections lift at every stage. It follows by the inductive hypothesis and the definition of the Harder-Narasimhan filtration that we have an exact sequence
\begin{equation}\label{eq4.2}
0  \ra\oplus_{i=1}^r K_C\otimes T^*\ra E/T\ra\oplus_{i=1}^s T\ra0
\end{equation}
with $r,s\ge0$. If $r=0$ or $s=0$, the result follows from Lemma \ref{lem2.2} or Lemma \ref{lem3}. If $r,s\ge1$, then pulling back the subbundle in the sequence \eqref{eq4.2} contradicts the semistability of $E$. This completes the proof.
\end{proof}

\begin{cor}
If $C$ is trigonal of genus $g = 5$ or $6$ or $g \geq 17$ and $E$ is a bundle computing $\gamma_n(C)$, then
$$
E \simeq \oplus_{i=1}^n T.
$$ 
\end{cor}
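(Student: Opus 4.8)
The plan is to deduce the corollary directly from Theorem \ref{thm4.4}. Its conclusion is exactly $E \simeq \oplus_{i=1}^n T$, and its only hypothesis is the surjectivity of the map \eqref{eq4.1} which, by the remark following Lemma \ref{lem3}, is equivalent to the normal generation of $L := K_C \otimes T^*$. Thus the entire content of the corollary is to verify that $L$ is normally generated when $g = 5, 6$ or $g \ge 17$. First I would record the numerical data: $\deg L = 2g-5$, $h^0(L) = g-2$ (as in Lemma \ref{lem2.2}) and, by Serre duality, $h^1(L) = h^0(T) = 2$, while $\Cl(C) = 1$. The usual numerical criteria for normal generation require (roughly) $\deg L \ge 2g+1 - 2h^1(L) - \Cl(C) = 2g-4$, whereas $\deg L = 2g-5$ falls short by exactly one. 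It is this off-by-one failure that prevents a uniform argument and forces one to exploit the special geometry of trigonal curves.

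The main tool I would use is the rational normal scroll $S \subset \PP^{g-1}$ swept out by the trigonal pencil $|T|$ in the canonical embedding; on it the hyperplane class $H$ restricts to $K_C$ and the ruling $R$ restricts to $T$, so that $L = (H-R)|_C$. Passing to the smooth ruled surface resolving $S$ (a Hirzebruch surface determined by the Maroni invariant of $C$), I would translate the multiplication maps for $L$ into multiplication maps on the ruled surface by means of the restriction sequences attached to $0 \ra \cO_S(m(H-R)-C) \ra \cO_S(m(H-R)) \ra \cO_C(mL) \ra 0$. On the surface the multiplication maps decompose into products of sections along the $\PP^1$-base and are elementary to analyse; the genuine input is the vanishing of the groups $H^1(S, m(H-R)-C)$, which is what guarantees that sections on the surface surject onto $H^0(C, mL)$ and hence transfers surjectivity from the surface down to $C$.

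These vanishing statements depend on $g$ and on the Maroni invariant. For $g = 5$ and $g = 6$ there are only one, respectively two, admissible scroll types, and I would simply check normal generation of $L$ directly in each case. For $g \ge 17$ the Maroni inequality bounds the imbalance of the scroll strongly enough, relative to $g$, that the relevant $H^1$ groups vanish for \emph{every} admissible scroll type, yielding normal generation uniformly. I expect the main obstacle to lie precisely in the intermediate range $7 \le g \le 16$: there the more unbalanced scroll types can produce a nonvanishing $H^1$, so the surface sections need no longer surject onto $H^0(C, mL)$, and normal generation of $L$ cannot be concluded by this method. This is exactly the source of the incompleteness flagged in the introduction, and closing it would require a separate treatment of the multiplication maps for those scroll types.
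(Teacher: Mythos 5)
Your reduction is the same as the paper's: by Theorem \ref{thm4.4} everything comes down to the surjectivity of \eqref{eq4.1}, and your off-by-one observation ($\deg L = 2g-5$ versus the bound $2g+1-2h^1(L)-\Cl(C)=2g-4$) is exactly why \cite[Theorem 1]{gl} cannot be quoted. From that point on, however, the paper does something entirely different and much shorter: for $g\ge 17$ it simply cites \cite[Theorem 2(a)]{gl}, the Green--Lazarsfeld result for degree exactly one below the bound (so the threshold $17$ is the numerology of that theorem, not scroll geometry); for $g=5$ both sides of \eqref{eq4.1} have dimension $6$, so surjectivity is equivalent to injectivity, which holds because the degree-$5$ plane image of $C$ cannot lie on a conic; for $g=6$ the dimensions are $10$ and $9$, and the degree-$7$ image of $C$ in $\PP^3$ cannot lie on two independent quadrics. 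No scrolls or Maroni invariants appear anywhere.

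Your scroll plan is a genuinely different route and its skeleton is sound, but you have mislocated its crux, and both of your structural predictions are false. Put $S=\mathbb{F}_e$ with negative section $B$ ($B^2=-e$) and fibre $f$; then $C\sim 3B+kf$ with $k=\frac12(g+3e+2)$, and $H-R$ pulls back to $A=B+af$ with $a=\frac12(g+e-4)$. The vanishings you call ``the genuine input'' are trivial and unconditional: $2A-C=-B+\frac12(g-e-10)f$ has degree $-1$ on every fibre, so all of its cohomology vanishes, while $\pi_*\cO_S(A-C)=0$ and $R^1\pi_*\cO_S(A-C)\simeq\cO_{\PP^1}(-3)$, so $H^1(S,A-C)=0$; neither statement involves $g$ or the Maroni invariant, so there is no range $7\le g\le 16$ where they fail. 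The genus enters in the step you dismiss as elementary: $H^0(S,A)\otimes H^0(S,A)\to H^0(S,2A)$ decomposes over $\PP^1$ into three graded pieces, and the middle piece $H^0(\cO_{\PP^1}(a))\otimes H^0(\cO_{\PP^1}(a-e))\to H^0(\cO_{\PP^1}(2a-e))$ is surjective iff $a\ge e$, i.e. iff $e\le g-4$ (note $2a-e=g-4\ge 1$, so the target is never zero). It is here that the Maroni bound $e\le\frac13(g+2)$ is needed, and it gives $e\le g-4$ for every $g\ge 7$, with $g=5,6$ checked by hand. So, carried out correctly, your method proves \eqref{eq4.1} for \emph{all} trigonal curves of genus $\ge 5$ --- strictly more than the corollary, and more than the paper itself establishes; indeed it settles the paper's own question for $g=7$, since any quadric through $C\subset\PP^4$ restricts on $S$ to a section of $\cO_S(2A-C)=0$ and hence contains the degree-$3$ scroll, which lies on exactly $3$ quadrics. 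The genuine defect in your proposal is therefore not that the method breaks down where you say it does; it is that you never perform the one computation carrying the content, and the claims you substitute for it (Maroni- and genus-dependent $H^1$-vanishing with threshold $17$, a breakdown on $7\le g\le 16$ ``explaining'' the paper's restriction) are incorrect.
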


\begin{proof}
We have to show that the map \eqref{eq4.1} is surjective. For $g \geq 17$ this follows from \cite[Theorem 2(a)]{gl} (see also \cite[Remark 5.2]{ln3}).

For $g=5$, both spaces are of dimension 6. So this is equivalent to saying that the map \eqref{eq4.1} is injective. If not, 
the image of $C$ in $\PP^2$ given by the sections of $K_C \otimes T^*$
is contained in a conic. Since $d_{K_C \otimes T^*} = 5$, this is impossible.

For $g = 6$, we have $h^0(K_C^2 \otimes T^{*2}) = 9$ 
and $\dim S^2H^0(K_C \otimes T^*) = 10$. We need to prove that the image of $C$ in $\PP^3$ given by $K_C \otimes T^*$ 
is not contained in 2 independent quadrics. This follows from the fact that the degree of the image is 7. 
\end{proof}

\begin{rem} {\em
To obtain the result for $g=7$, we would have to prove that the image of $C$ in $\PP^4$ given by $K_C \otimes T^*$ is contained in at most 3 independent quadrics.

More generally, note that
$$h^0(K_C\otimes T^*)=h^0(T)+2g-2-3-g+1=g-2,$$
while
$$h^0(K_C^2\otimes T^{*2})=4g-4-6-g+1=3g-9.$$
So the difference in dimension between the two sides of \eqref{eq4.1} is easily seen to be $\frac12(g-4)(g-5)$. This implies that, if \eqref{eq4.1} fails to be surjective, then the image of $C$ under the morphism given by the sections of the very ample line bundle $K_C\otimes T^*$ is contained in a quadric of corank $g-5$, hence of rank $3$. This alone, however, is not sufficient to prove non-surjectivity.}\end{rem} 

\section{Genus 3}\label{g3}

Let $C$ be a non-hyperelliptic curve of genus $3$. Then $C$ is trigonal and (see \eqref{eq1.1})
\begin{equation}\label{eq5.1}
d_1=3,\ \ d_2=4,\ \ d_r=r+3 \text{ for }r\ge3.
\end{equation}
The Clifford index $\Cl_n(C)$ is not defined for any $n$. In fact, if $E$ is a semistable bundle of rank $n$ with $h^0(E)\ge2n$, then Clifford's theorem for semistable bundles implies that $d_E\ge2n$ with equality possible only on a hyperelliptic curve. So in our case $d_E>2n$, contradicting the assumption that $d_E\le n(g-1)$.

The situation for $\gamma_n(E)$ is quite different. To see this, we begin with a definition and a lemma.

For any generated vector bundle, we define the {\it dual span bundle} $D(E)$ by the exact evaluation sequence
$$0\ra D(E)^*\ra H^0(E)\otimes{\mathcal O}_C\ra E\ra 0.$$

\begin{lem}\label{lem5.1}
Let $C$ be a trigonal curve of genus $3$ and let $L$ be a line bundle on $C$ of degree $d_n$ with $h^0(L)=n+1$. Then $L$ is generated and $D(L)$ is semistable with $h^0(D(L))=n+1$.
\end{lem}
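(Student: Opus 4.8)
The plan is to prove three things about $L$: that it is generated, that $h^0(D(L)) = n+1$, and that $D(L)$ is semistable. The numerics we have are $d_L = d_n = n+3$ (for $n \geq 3$) from \eqref{eq5.1}, together with $h^0(L) = n+1$, so that $D(L)$ has rank $n$ and degree $d_n = n+3$, hence slope $\mu(D(L)) = \frac{n+3}{n} = 1 + \frac3n$. I would handle the small cases $n \le 2$ separately using \eqref{eq5.1} directly.

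First I would establish generation. Since $L$ has degree $n+3$ and $h^0(L) = n+1$, failure to be generated at a point $p$ would give $h^0(L(-p)) = n+1$, i.e. a line bundle of degree $n+2$ with $n+1$ sections; but $d_n$ is the minimal degree of a line bundle with at least $n+1$ sections, and $n+2 < n+3 = d_n$, a contradiction. (For $n\le 2$ the same comparison with $d_1=3$, $d_2=4$ applies.) Hence $L$ is generated and the defining sequence for $D(L)$ makes sense. Next, to compute $h^0(D(L))$, I would dualize the evaluation sequence and take cohomology; since $L$ is generated and $H^0$ of the trivial part is tautologically identified, the relevant point is that the coboundary/connecting behaviour forces $h^0(D(L)) = h^0(L) = n+1$. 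More precisely I would argue that the natural map is an isomorphism on global sections because $L$ has no ``extra'' sections beyond those pulled back, using $h^1$ vanishing obtained from $d_L = n+3 > 2g-2 = 4$ for the large cases, and checking the boundary cases by hand.

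For semistability — which I expect to be the main obstacle — I would argue by contradiction. Suppose $F \subset D(L)$ is a subbundle of rank $r$ with $\mu(F) > \mu(D(L)) = 1 + \frac3n$, equivalently $d_F \geq r + 4$ when $\frac rn$ is not too special; the integrality here is delicate, so I would instead work with the quotient. A subsheaf $F \subset D(L)$ dualizes to a quotient $D(L)^* \twoheadrightarrow F^*$, and composing with the surjection $H^0(L)\otimes \cO_C \twoheadrightarrow D(L)^*$ (the first map in the sequence) would yield a generated quotient of the trivial bundle; destabilizing subsheaves of $D(L)$ thus correspond to subspaces $W \subseteq H^0(L)$ with small-degree image. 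The combinatorial heart is to show that any such $W$ spanning a line bundle summand of low degree would force a line bundle of degree strictly below $d_r$ carrying at least $r+1$ sections, contradicting the definition of the gonality sequence \eqref{eq5.1}.

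The hard part will be converting a destabilizing subbundle of $D(L)$ into a line bundle (or lower-rank bundle) violating \eqref{eq5.1}, because one must control both the rank and the degree of the image simultaneously and rule out the borderline slopes allowed by the non-strict inequality in the definition of semistability. I would expect to reduce to the rank-one case: a quotient line bundle $D(L) \twoheadrightarrow N$ of slope $< \mu(D(L))$, i.e. $d_N \le 1$, which (being a quotient of a generated bundle) is itself generated and hence either trivial or of degree $\geq 1$ with a section vanishing somewhere, and then trace through the induced sections of $L$ to contradict minimality of $d_n$. The genus-$3$ hypothesis enters crucially through the explicit values in \eqref{eq5.1}, which leave essentially no room for a destabilizing subobject once the degree bookkeeping is pinned down.
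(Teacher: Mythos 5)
Your first part (generation) is correct: if $L$ were not generated at $p$, then $L(-p)$ would have degree $d_n-1$ and $h^0\ge n+1$, contradicting the definition of $d_n$. Note for context that the paper gives no direct proof of this lemma at all --- it cites the proofs of Proposition 4.9(d) and Theorem 4.15(a) of \cite{ln}, observing that the blanket assumption $g\ge4$ there is never used --- so you are attempting a self-contained argument; unfortunately its two substantive parts have genuine gaps. For $h^0(D(L))$: dualizing the evaluation sequence gives $0\ra L^*\ra H^0(L)^*\otimes\cO_C\ra D(L)\ra 0$, and since $h^0(L^*)=0$ we get $h^0(D(L))=n+1+\dim\ker\alpha$, where $\alpha\colon H^1(L^*)\ra H^0(L)^*\otimes H^1(\cO_C)$ is the connecting map. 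By Serre duality $\alpha$ is dual to the multiplication map $H^0(L)\otimes H^0(K_C)\ra H^0(K_C\otimes L)$, so $h^0(D(L))=n+1$ is \emph{equivalent} to surjectivity of that multiplication map. The vanishing $h^1(L)=0$ that you invoke never enters: $H^1(L)$ simply does not occur in this sequence, and no degree count yields the surjectivity. Concretely, for $n=2$ the hypotheses force $L\simeq K_C$ and the claim is exactly Max Noether's theorem; for $n=5$ the choice $L=K_C^2$ is allowed and again needs Noether; and the ``boundary cases'' $L\simeq K_C^2(-D)$ with $\deg D=5-n$, $2\le n\le 5$, are precisely those with $h^0(K_C^2\otimes L^*)\neq0$, where the Castelnuovo--Mumford-type argument (multiplying by the generated bundle $K_C$, which needs $h^1(L\otimes K_C^*)=0$) breaks down. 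This is the real content of the $h^0$ statement, and your sketch does not engage with it.

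Semistability has a similar gap. Your rank-one case is fine: a quotient line bundle $N$ of $D(L)$ is generated (being a quotient of the generated bundle $D(L)$; note, though, that the trivial bundle surjects onto $D(L)$, not onto $D(L)^*$ as you wrote --- $D(L)^*$ is the \emph{sub}bundle of $H^0(L)\otimes\cO_C$), and $N\not\simeq\cO_C$ because $h^0(D(L)^*)=0$, so $h^0(N)\ge2$ and $d_N\ge d_1=3>\mu(D(L))$ by \eqref{eq5.1}. But the reduction to rank one is not justified and is not routine: a destabilizing quotient $Q$ may have any rank $q$ with $2\le q\le n-1$, and one must show $d_Q\ge q(n+3)/n$. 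The only bound your method produces for such $Q$ ($Q$ generated with $h^0(Q^*)=0$, hence $\det Q$ generated and nontrivial) is $d_Q\ge3$, which is insufficient already for $q=3$ (and even for $q=2$ when $n\le5$); for instance a quotient of rank $n-1$ must be shown to have degree at least $n+2$. Converting a higher-rank destabilizing quotient into a low-degree line bundle with many sections, so as to contradict \eqref{eq5.1}, is exactly the step you defer as ``the hard part'', and it is precisely the content of the proof of Theorem 4.15(a) of \cite{ln} (for $n=2$, of the stability result of \cite{pr}) that the paper relies on. As written, the proposal establishes only the generation statement.
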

\begin{proof} See the proofs of \cite[Proposition 4.9(d) and Theorem 4.15(a)]{ln}. (Although there is a blanket assumption in \cite{ln} that $g\ge4$, this is not used in these proofs.)
\end{proof}

\begin{prop}\label{prop5.2}
Let $C$ be a trigonal curve of genus $3$ and $n\ge2$. Then $\gamma_n(C)$ is defined and
\begin{equation}\label{eq5.2}
\gamma_n(C) =\left\{\begin{array}{cl}1&\text{ if }n\text{ is even }\\1+\frac1n&\text{ if }n\text{ is odd }.\end{array}\right.
\end{equation}\end{prop}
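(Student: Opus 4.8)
The plan is to establish the two inequalities $\gamma_n(C)\le v_n$ and $\gamma_n(C)\ge v_n$ separately, where $v_n=1$ for $n$ even and $v_n=1+\frac1n$ for $n$ odd. Since every value of $\gamma$ lies in $\frac1n\ZZ$, for the lower bound it will suffice to prove $\gamma(E)\ge1$ for every contributing $E$, together with the statement that, when $n$ is odd, no contributing $E$ has $\gamma(E)=1$ (so that $\gamma(E)\ge1+\frac1n$).

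For the upper bound I would produce explicit computing bundles via Lemma~\ref{lem5.1}. When $n=2m$ is even, take $L=K_C$, which has degree $d_2=4$ and $h^0=3$; then $D(K_C)$ is semistable of rank $2$, degree $4$ and $h^0=3$, so $\mu(D(K_C))=2=g-1$ and $\gamma(D(K_C))=1$. The bundle $D(K_C)^{\oplus m}$ is semistable of slope $2$ with $h^0=3m\ge n+1$ and $\gamma=1$, giving $\gamma_n(C)\le1$. When $n$ is odd (so $n\ge3$), take $L$ of degree $d_n=n+3$ with $h^0(L)=n+1$; by Lemma~\ref{lem5.1}, $D(L)$ is semistable of rank $n$, degree $n+3$ and $h^0=n+1$, so $\mu(D(L))=\frac{n+3}{n}\le2$ and $\gamma(D(L))=\frac{(n+3)-2}{n}=1+\frac1n$, giving $\gamma_n(C)\le1+\frac1n$. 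These constructions also exhibit contributing bundles, so $\gamma_n(C)$ is defined.

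For the lower bound $\gamma(E)\ge1$, let $E$ contribute to $\gamma_n(C)$. The crucial observation is that $\mu(E)>1$: by \cite[Corollary 4.12]{ln} a semistable bundle of rank $n$ with $h^0\ge n+1$ satisfies $d_E\ge d_n$, and $d_n>n$ by \eqref{eq5.1}. I then run the evaluation-sequence argument of Lemma~\ref{lem2.1} with $T=K_C(-p)$ a $g^1_3$: tensoring $0\ra T^*\ra H^0(T)\otimes\cO_C\ra T\ra0$ by $E$ yields $2h^0(E)\le h^0(E\otimes T^*)+h^0(E\otimes T)$. Here $\mu(E\otimes T^*)=\mu(E)-3<0$, so $h^0(E\otimes T^*)=0$; Riemann--Roch together with $T^*\otimes K_C\cong\cO_C(p)$ gives $h^0(E\otimes T)=d_E+n+h^0(E^*(p))$; and $h^0(E^*(p))=0$ because $\mu(E^*(p))=1-\mu(E)<0$ (this is exactly where $\mu(E)>1$ is used). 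Hence $2h^0(E)\le d_E+n$, which rearranges to $\gamma(E)\ge1$.

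The remaining, and hardest, step is to show that for odd $n$ no contributing $E$ has $\gamma(E)=1$. Numerically, $\gamma(E)=1$ forces $d_E=n+2s$ with $s=h^0(E)-n\ge1$, and $\mu(E)\le2$ forces $s\le\frac{n-1}2$, hence $\mu(E)<2$; combined with $d_E\ge d_n=n+3$ this already gives a contradiction when $n=3$. For general odd $n$ I expect to analyse the equality case of the bound above: $\gamma(E)=1$ forces the multiplication $H^0(E)\otimes H^0(T)\ra H^0(E\otimes T)$ to be surjective for \emph{every} $g^1_3\;T=K_C(-p)$, and, as in the proof of Theorem~\ref{thm4.4}, an induction on $n$ using the Harder--Narasimhan filtration should show that $E$ is an iterated extension of copies of a minimal $\gamma=1$ contributor. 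In genus $3$ there is no rank-$1$ contributor (a line bundle with $h^0\ge2$ has degree $\ge d_1=3>g-1$), so the role played by $T$ when $g\ge5$ is taken here by the rank-$2$ bundle $D(K_C)$; consequently $E$ must have even rank, contradicting $n$ odd. Making this structural step precise — identifying $D(K_C)$ as the essentially unique minimal building block and controlling the Harder--Narasimhan quotients — is the main obstacle, and is presumably where the accompanying structure result (Theorem~\ref{thm5.3}) is really proved.
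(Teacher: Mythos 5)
Your upper-bound construction coincides with the paper's, and your evaluation-sequence argument for $\gamma(E)\ge1$ (using $T=K_C(-p)$ and $\mu(E)>1$) is a valid alternative to the paper's route, which instead quotes the Brill--Noether bounds of \cite{bgn,m1}. One caveat there: you justify $\mu(E)>1$ by citing \cite[Corollary 4.12]{ln}, which carries the blanket hypothesis $g\ge4$; at $g=3$ this needs re-justification (note that the paper is careful on exactly this point, extending only Proposition 4.9(d) and Theorem 4.15(a) of \cite{ln} to genus $3$ in Lemma \ref{lem5.1}). The fact you need is true, but the clean source is again \cite{bgn,m1}: on a trigonal curve a semistable $E$ with $h^0(E)=n+s$, $s\ge1$, and $d_E<2n$ satisfies $d_E-n\ge3s>0$.

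The genuine gap is the case of odd $n\ge5$, which you yourself flag as ``the main obstacle'': it is not proved in your proposal, and the route you sketch would fail as stated. Every bundle contributing to $\gamma_n(C)$ is semistable, so its Harder--Narasimhan filtration is trivial; the filtration needed for $d_E=2n$ is a Jordan--H\"older filtration, whose graded pieces are \emph{stable of slope} $2$. The missing ingredient is then Mercat's theorem on stable bundles of slope $2$ \cite{m2}: a stable $Q$ of rank $r$ and slope $2$ with $h^0(Q)\ge r+1$ satisfies $\gamma(Q)\ge1+\frac1r$ \emph{unless} $Q\simeq D(K_C)$ (whose stability comes from \cite{pr}). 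This is precisely the classification of your conjectured ``minimal building block'', and it is an external input, not something recoverable by the induction of Theorem \ref{thm4.4}, whose engine --- the nonzero map $T\to E$ of Lemma \ref{lem2} --- has no analogue in genus $3$; nor is it proved in Theorem \ref{thm5.3}, which is deduced \emph{from} Proposition \ref{prop5.2}, not the other way round. With Mercat's theorem in hand the paper's argument is short: if $d_E<2n$, then \cite{bgn,m1} give $\gamma(E)\ge1+\frac{s}{n}\ge1+\frac1n$ directly; if $d_E=2n$ and $n$ is odd, some Jordan--H\"older factor $Q_0$ has odd rank $r$, hence $Q_0\not\simeq D(K_C)$, hence $\gamma(Q_0)\ge1+\frac1r$, and summing the inequality $d_Q-2(h^0(Q)-\rk Q)=\gamma(Q)\rk Q\ge\rk Q$ over all factors (with the strict improvement for $Q_0$) yields $n\gamma(E)\ge n+1$, i.e. $\gamma(E)\ge1+\frac1n$.
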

\begin{proof} Let $L$ be a line bundle of degree $d_n$ with $h^0(L)=n+1$. By Lemma \ref{lem5.1}, $D(L)$ is semistable with $h^0(D(L))=n+1$. Moreover $\mu(D(L))=\frac{d_n}n\le2$ and 
\begin{equation}\label{eq5.3}
\gamma(D(L)) =\left\{\begin{array}{cl}1&\text{ if }n=2\\1+\frac1n&\text{ if }n\ge3.\end{array}\right.
\end{equation}
Noting that $D(K_C)^{\oplus n/2}$ is semistable when $n$ is even, this shows that the right hand side of \eqref{eq5.2} gives an upper bound for $\gamma_n(C)$. It remains to prove that this is also a lower bound for $\gamma(E)$, where $E$ is a bundle contributing to $\gamma_n(C)$.

Suppose $E$ is such a bundle with $h^0(E)=n+s$, $s\ge1$. By definition, we have $0<d_E\le2n$. If $d_E<2n$, then, by \cite{bgn, m1},  $d_E-n\ge3s$, so 
\begin{equation}\label{eq5.4}
\gamma(E)=\frac1n(d_E-2s)\ge1+\frac{s}n\ge1+\frac1n.
\end{equation}
If $d_E=2n$ and $E$ is stable, then, by \cite{m2}, \eqref{eq5.4} still holds unless $E\simeq D(K_C)$, in which case $\gamma(E)=1$. (For the stability of $D(K_C)$, see \cite[Corollary 3.5]{pr}.)

Suppose now that $E$ is any bundle of degree $2n$ contributing to $\gamma_n(C)$. We consider a Jordan-H\"older filtration of $E$. We have shown that every quotient $Q$ in this filtration has $\gamma(Q)\ge1$, so $\gamma(E)\ge1$. If $n$ is odd, there must exist a quotient $Q_0$ of odd rank $r$, which therefore has $\gamma(Q_0)\ge1+\frac1r$ by \eqref{eq5.4}. So, in this case, $\gamma(E)>1$ and hence $\gamma(E)\ge1+\frac1n$.  \end{proof}

\begin{theorem}\label{thm5.3} Let $C$ be a trigonal curve of genus $3$ and $n\ge2$. Then
\begin{itemize}
\item[(i)] if $n$ is even, the only bundle computing $\gamma_n(C)$ is $D(K_C)^{\oplus n/2}$;
\item[(ii)] if $n$ is odd, the bundles $E$ computing $\gamma_n(C)$ with $d_E<2n$ have the form $D(L)$, where $L$ is a line bundle of degree $d_n$ with $h^0(L)=n+1$;
\item[(iii)] if $n$ is odd, the associated graded object of any bundle computing $\gamma_n(C)$ with $d_E=2n$ has one of the following forms:
\item $D(K_C)^{\oplus (n-1)/2}\oplus {\mathcal O}_C(p,q)$ for some $p,q\in C$
\item $D(K_C)^{\oplus (n-3)/2}\oplus D(M)$, where $M$ is a line bundle of degree $6$, $M\not\simeq K_C(p,q)$.
\end{itemize}\end{theorem}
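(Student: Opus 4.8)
My strategy throughout is to refine the Jordan--H\"older argument of Proposition \ref{prop5.2} by tracking precisely how far $\gamma(E)$ exceeds the contributions of its stable quotients. If $E$ contributes to $\gamma_n(C)$ and $Q_1,\dots,Q_k$ are the stable quotients of a Jordan--H\"older filtration (all of slope $\mu(E)$), then additivity of degree and rank together with $h^0(E)\le\sum_i h^0(Q_i)$ gives the identity
$$
n\gamma(E)=\sum_{i} n_{Q_i}\gamma(Q_i)+2\ell,\qquad \ell:=\Big(\sum_i h^0(Q_i)\Big)-h^0(E)\ge0,
$$
with $\ell=0$ precisely when all sections lift at every stage. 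For even $n$ the bound \eqref{eq5.4} forces $d_E=2n$, while in (iii) this is assumed; in both cases the quotients have slope $2$, and I combine the identity with the per-quotient estimates from the proof of Proposition \ref{prop5.2}: every stable slope-$2$ quotient has $\gamma(Q_i)\ge1$, with equality only for $D(K_C)$, and otherwise $\gamma(Q_i)\ge 1+\frac1{n_{Q_i}}$. Rewriting the excess as $\sum_i n_{Q_i}(\gamma(Q_i)-1)+2\ell$, the value of $\gamma_n(C)$ forces this quantity to equal $0$ when $n$ is even and $1$ when $n$ is odd. In the even case every $Q_i$ must be $D(K_C)$ and $\ell=0$; in the odd case, since $2\ell$ is even, we get $\ell=0$ and exactly one quotient $Q_0$ with $n_{Q_0}(\gamma(Q_0)-1)=1$, all others being $D(K_C)$.

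For part (i) it remains to upgrade ``$\mathrm{gr}(E)\simeq D(K_C)^{\oplus n/2}$ with all sections lifting'' to an actual splitting. I would do this by induction on $n$, peeling off one copy of $D(K_C)$ as in the proof of Theorem \ref{thm4.4}: an extension of $D(K_C)^{\oplus(n/2-1)}$ by $D(K_C)$ in which all sections lift is split as soon as the map $H^1(\End(D(K_C)))\ra\Hom(H^0(D(K_C)),H^1(D(K_C)))$ is injective, equivalently as soon as the multiplication map
$$
H^0(D(K_C))\otimes H^0(K_C\otimes D(K_C)^*)\ra H^0(K_C\otimes\End(D(K_C)))
$$
is surjective. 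A Riemann--Roch count shows both sides have dimension $9$, so the content is to prove this map is an isomorphism; I expect to do so using the identification $D(K_C)^*\simeq\Omega^1_{\PP^2}(1)|_C$ under the plane-quartic embedding $C\hra\PP^2$ given by $K_C$, reducing surjectivity to a statement about quadrics through $C$. Establishing this is the main obstacle in (i), and is the genus-$3$ analogue of Lemmas \ref{lem2.2} and \ref{lem3}.

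For part (ii) the bundle has $d_E<2n$, so the Jordan--H\"older comparison is replaced by the sharp bound of \cite{bgn,m1}: one has $d_E-n\ge3s$ with $h^0(E)=n+s$, and $\gamma(E)=1+\frac1n$ forces $s=1$, $d_E=n+3=d_n$ and equality in the bound. I first check that $E$ is generated (this is exactly where sharpness of the bound is used, as in the generation step of Theorem \ref{thm3.2}); then in the evaluation sequence $0\ra D(E)^*\ra H^0(E)\otimes\cO_C\ra E\ra0$ the kernel $D(E)^*$ has rank $h^0(E)-n=1$, so $L:=D(E)$ is a line bundle of degree $d_n$. Dualizing and using $h^0(E^*)=0$ embeds $H^0(E)^*$ in $H^0(L)$, giving $h^0(L)\ge n+1$, whence $h^0(L)=n+1$ by minimality of $d_n$ in \eqref{eq5.1}; the dual sequence then exhibits $E\simeq D(L)$. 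The delicate point here is the generation of $E$.

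For part (iii) I must identify the single special quotient $Q_0$, which by parity has odd rank $m:=n_{Q_0}$ and satisfies $\gamma(Q_0)=1+\frac1m$. If $Q_0$ does not contribute (that is, $h^0(Q_0)\le m$), the estimate $\gamma(Q_0)\ge2$ forces $m=1$ and $h^0(Q_0)=1$, so $Q_0\simeq\cO_C(p,q)$ for an effective divisor $p+q$, giving the first form. If $Q_0$ contributes, it realizes equality in the bound of \cite{m2}, and the crux is to show that such an extremal stable slope-$2$ bundle is the dual span $D(L)$ of a \emph{line} bundle $L$. Granting this, slope $2$ together with Riemann--Roch on the evaluation sequence forces $d_L=6-2h^1(L)$, so the only possibilities are $d_L=4$ with $L=K_C$ (which has $\gamma=1$, excluded since $\gamma(Q_0)>1$) and $d_L=6$ with $Q_0=D(M)$ of rank $3$; in particular all odd ranks $m\ge5$ are ruled out. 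Finally one excludes $M\simeq K_C(p,q)$: in that case $H^0(K_C)\subset H^0(M)$ makes $D(M)$ strictly semistable with graded object $D(K_C)\oplus\cO_C(p,q)$, already the first form, so requiring $M\not\simeq K_C(p,q)$ keeps $D(M)$ stable and the two forms genuinely distinct. The main obstacle in (iii) is establishing that the extremal contributing quotient is the dual span of a line bundle.
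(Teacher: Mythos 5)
Your Jordan--H\"older accounting is correct and is essentially the paper's own argument made explicit: the identity $n\gamma(E)=\sum_i n_{Q_i}\gamma(Q_i)+2\ell$, the per-quotient bounds from the proof of Proposition \ref{prop5.2}, and the parity argument isolating a single special quotient $Q_0$ all match the paper, and your treatment of (ii) is if anything more detailed than the paper's (``it is now easy to see''). However, at the two places you yourself flag as ``the main obstacle'', the proposal has genuine gaps, and these are exactly the steps carrying the content of the theorem. In part (i) everything reduces, as you say, to the surjectivity of
$$
H^0(D(K_C))\otimes H^0(K_C\otimes D(K_C)^*)\lra H^0(K_C\otimes D(K_C)\otimes D(K_C)^*),
$$
which you leave open, proposing to attack it via the plane-quartic model and quadrics through $C$. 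The paper closes it in two lines with a tool already on the table: tensor the dualized evaluation sequence $0\ra K_C^*\ra H^0(D(K_C))\otimes\cO_C\ra D(K_C)\ra 0$ with $K_C\otimes D(K_C)^*$ and take global sections; the kernel of the multiplication map is then $H^0(D(K_C)^*)$, which vanishes since $D(K_C)$ is stable of positive degree. Combined with your own dimension count ($9=9$, where the count for the target also uses simplicity of $D(K_C)$, not just Riemann--Roch), the map is an isomorphism. So the gap is fillable, but your write-up does not fill it, and the projective-geometry detour is unnecessary.

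In part (iii), the unproved ``crux'' --- that the extremal stable slope-$2$ contributing quotient is a dual span $D(M)$ of a line bundle --- is precisely what the paper extracts from Mercat's classification of stable bundles of slope $2$ \cite{m2}, the same reference already invoked in the proof of Proposition \ref{prop5.2}; without citing that classification, or reproving it (which requires generation of $Q_0$ followed by the rank-one-kernel argument you use in (ii)), your case analysis is incomplete. There is also a logical detour: you do not need the dual-span property to exclude odd ranks $m\ge5$. Equality in Mercat's bound forces $h^0(Q_0)=m+1$, and then $\gamma(Q_0)=2-\frac{2}{m}=1+\frac{1}{m}$ gives $m=3$ by pure arithmetic; the dual-span statement is needed only to convert ``stable of rank $3$, degree $6$, $h^0=4$'' into ``$\simeq D(M)$''. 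Your exclusion of $M\simeq K_C(p,q)$ via stability (Butler's criterion) is correct and matches the paper. In (ii) the generation of $E$ remains unproved on both sides, so I do not count that against you; but as proposed, (i) and (iii) are outlines with the decisive steps missing rather than complete proofs.
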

\begin{proof} (i) It follows from Proposition \ref{prop5.2} and its proof that the associated graded object of any bundle computing $\gamma_n(C)$ for $n$ even has the form $D(K_C)^{\oplus n/2}$. The result will follow if we show that any extension 
$$0\to D(K_C)\ra E\ra D(K_C)\ra 0,$$
for which all sections of the quotient lift to $E$, splits. This is equivalently to showing that
\begin{equation}\label{eq5.5}
H^0(D(K_C))\otimes H^0(K_C\otimes D(K_C)^*)\ra H^0(K_C\otimes D(K_C)\otimes D(K_C)^*)
\end{equation}
is surjective. Using the sequence
$$0\ra K_C^*\ra H^0(D(K_C))\otimes{\mathcal O_C}\ra D(K_C)\ra0$$
and the fact that $D(K_C)$ is stable, we see that the kernel of \eqref{eq5.5} is $H^0(D(K_C)^*)=0$. Now $h^0(D(K_C))=3$ and $$h^0(K_C\otimes D(K_C)^*)=h^1(D(K_C))=3$$ by Riemann-Roch. Moreover 
\begin{eqnarray*}
h^0(K_C\otimes D(K_C)\otimes D(K_C)^*)&=&h^1(D(K_C)\otimes D(K_C)^*)\\&=&h^0(D(K_C)\otimes D(K_C)^*)+4g-4=9
\end{eqnarray*} 
by Riemann-Roch and the stability of $D(K_C)$. It follows that \eqref{eq5.5} is an isomorphism.

(ii) For equality in \eqref{eq5.4}, we must have $s=1$ and $d_E=n+3$ and then $d_E=d_n$ by \eqref{eq5.1}. It is now easy to see that $E\simeq D(L)$ for some $L$ of degree $d_n$ with $h^0(L)=n+1$.

(iii) In order to compute $\gamma_n(C)$, a bundle $E$ of slope $2$ must have precisely one quotient $Q$ in a Jordan-H\"older filtration of rank $r$ with $\gamma(Q)=1+\frac1r$.  By the proof of  Proposition \ref{prop5.2}, it follows that $Q\simeq{\mathcal O}_C(p,q)$ or $Q\simeq D(M)$ for some $M$ of degree $6$. In the latter case, $D(M)$ is stable unless $M\simeq K_C(p,q)$ for some $p,q$ (see \cite[Theorem 1.2]{b}). 
\end{proof}

\begin{rem}{\em In case (iii) of the theorem, we can be more precise. For example, the only bundles computing $\gamma_3(C)$ are 
\begin{itemize}
\item $D(K_C)\oplus {\mathcal O}_C(p,q)$;
\item for each $p,q$, a unique non-trivial extension
$$0\ra D(K_C)\ra E\ra {\mathcal O}_C(p,q) \ra0;$$
\item $D(M)$ with $M$ a line bundle of degree $6$.
\end{itemize}
In the third case, $D(M)$ is stable unless $M\simeq K_C(p,q)$; moreover we have non-trivial exact sequences
$$0\ra {\mathcal O}_C(p,q)\ra D(K_C(p,q))\ra D(K_C)\ra0.$$
}\end{rem}

\end{document}